\newtheorem{thm}{Theorem}
\newtheorem{lem}[thm]{Lemma}
\newtheorem{corollary}[thm]{Corollary}
\newtheorem{proposition}[thm]{Proposition}
\newdefinition{rmk}{Remark}
\newdefinition{defi}{Definition}
\newdefinition{example}{Example}
\newcommand{\identity}{I}
\newcommand{\unitvector}{\mathbbm{e}}
\journal{Linear Algebra and Applications}
\begin{document}

\begin{frontmatter}



\title{The $d$-Majorization Polytope}

 \author[label1,label2]{Frederik vom Ende}
 \affiliation[label1]{organization={Department of Chemistry},
 addressline={Technische Universit{\"a}t M{\"u}nchen},
 city={Garching},
 postcode={85737},
 country={Germany}}

 \affiliation[label2]{organization={Munich Centre for Quantum Science and Technology (MCQST)},
 city={M{\"u}nchen},
 postcode={80799},
 country={Germany}}
 
 \author[label3]{Gunther Dirr}
 \affiliation[label3]{organization={Department of Mathematics},
 addressline={University of W{\"u}rzburg},
 city={W{\"u}rzburg},
 postcode={97074},
 country={Germany}}
\begin{abstract}
  We investigate geometric and topological properties of $d$-majorization -- a generalization of classical majorization
  to positive weight vectors $d \in \mathbb{R}^n$. In particular, we derive a new, simplified characterization of
  $d$-majorization which allows us to  work out a halfspace description of the corresponding $d$-majorization polytopes.
  That is, we write the set of all vectors which are $d$-majorized by some given vector $y \in \mathbb{R}^n$ as an  intersection of
  finitely many half spaces, i.e.~as solutions to an inequality of the type $Mx\leq b$. Here $b$ depends on $y$ while $M$ can be
  chosen independently of $y$. This description lets us prove continuity of the $d$-majorization polytope (jointly
  with respect to $d$ and $y$) and, furthermore, lets us fully characterize its extreme points. Interestingly,
  for $y\geq 0$ one of these extreme points classically majorizes every other element of the $d$-majorization polytope.

    Moreover, we show that the induced preorder structure on $\mathbb{R}^n$ admits minimal and maximal elements. While the
    former are always unique the latter are unique if and only if they correspond to the unique minimal
    entry of the $d$-vector.
\end{abstract}



\begin{keyword}
majorization relative to $d$ \sep $d$-majorization polytope \sep convex polytopes \sep extreme points


\MSC[2020] 15B51\sep 26A51\sep 52B11\sep 52B12
\end{keyword}

\end{frontmatter}


\section{Introduction}
\noindent
The concept of $d$-majorization is a natural generalization of the classical notion of majorization as we
  will see in a moment. But let us first describe a deep relation between $d$-majorization,
$d$-stochastic matrices, and quantum physics. Readers who are not
familiar with the relevant quantum-mechanical terminology 
 can skip the following two paragraphs as---although we will draw some connections to 
 the physics literature throughout this article---we will not use any of these results.
 
Over the last few years, sparked by Brand\~ao, Horodecki, Oppenheim 
\cite{Brandao15,Horodecki13}, and further pursued by others
\cite{Faist17,Gour15,Lostaglio18,Sagawa19,Mazurek19,Alhambra19}, thermo-majorization and in particular its
 resource theory approach 
 has been a widely discussed and researched topic in quantum physics. Here the 
 central question is:
 Given a fixed ``background temperature'' $T>0$ as well as initial and target states 
 (density operators) of a quantum system,
 can the former be mapped to the 
 latter by means of a thermal operation?
Here the set of all thermal operations which constitutes a compact, convex semigroup
   within the set of all quantum channels \cite[App.~C] {Lostaglio15} consists
 of all linear maps which can be approximated arbitrarily well by quantum channels 
 of the form
$$
\Phi(\cdot)=\operatorname{tr}_B\Big(U\Big((\cdot)\otimes\frac{e^{-H_B/T}}{\operatorname{tr}(e^{-H_B/T})}\Big)U^*\Big)
\,,
$$
where $H_B$ is an arbitrary ``bath'' Hamiltonian, $H_S$ the system's Hamiltonian, $U$ any unitary
  operator satisfying the stabilizer condition
  $
  U (H_S\otimes\mathbbm{1}_B+\mathbbm{1}_S\otimes H_B) U^* = H_S\otimes\mathbbm{1}_B+\mathbbm{1}_S\otimes H_B
  $, 
and $\operatorname{tr}_B$ denotes the partial trace operation with respect to the
``bath'', cf.~\cite{Janzing00,Brandao13}.

Two key properties of thermal operations are that $e^{-H_S/T}$ is a fixed point of all 
thermal operations---that is, the Gibbs 
state of the system is preserved---and that they commute with the system's ``natural'' 
dynamics at all times, i.e.
\begin{equation}\label{eq:intro_1}
\Phi(e^{-iH_St}(\cdot)e^{iH_St})=e^{-iH_St}\Phi(\cdot)e^{iH_St}\,,
\end{equation} 
for all $t \geq 0$ which is equivalent to $[\Phi,\operatorname{ad}_{H_S}]=0$ \cite{Lostaglio15_2}. 
However, these two features do \textit{not} fully characterize thermal operations, 
meaning there exist quantum channels which satisfy the above properties but lie outside the set of
thermal operations \cite{Ding21}. For a comprehensive introduction to this topic we refer to the review article \cite{Lostaglio19}.

For finite-dimensional systems, the above commutation relation \eqref{eq:intro_1} allows for a partial answer
to the state-conversion problem posed above because it reveals that $\Phi$ and $\operatorname{ad}_{H_S}$
  share common invariant subspaces. In particular, if $H_S \in \mathbb{C}^{n \times n}$ is diagonal with non-degenerate
  spectrum the set of diagonal density matrices is a common invariant and the action of thermal operations on it
  coincides with the action of $d$-stochastic\footnote{A matrix $A\in\mathbb R^{n\times n}$ is said to be $d$-\textit{stochastic}
    if it is column-stochastic with $Ad=d$, where \textit{column-stochastic} means that all its entries are non-negative and
    each column sums up to one.}
  matrices on $\mathbb{R}^n$, cf.~\cite[Thm.~1]{Lostaglio19}. The vector $d$ then consists of the diagonal\footnote{Certainly,
    it suffices to require that the density matrices are represented in an eigenbasis of $H_S$ in which case $d$ consists of
    the eigenvalues of $e^{-H_S/T}$.} entries of the matrix $e^{-H_S/T}$ (up to a global constant which can be disregarded). Thus,
  if the initial state and the final state are both ``diagonal'' then the $d$-stochastic matrices fully characterize all possible
  state transitions \cite{Horodecki13,Korzekwa16}.

From a mathematical point of view this puts us in the realm of majorization relative to a 
positive vector $d\in\mathbb R^n$ as introduced by Veinott \cite{Veinott71} 
and, in the quantum regime, by Ruch, Schranner, and Seligman \cite{Ruch78}.
For positive $d$, some vector 
$x$ is said to be $d$-majorized by $y$, denoted by $x\prec_d y$, if there exists a
$d$-stochastic matrix $A$ such that $x=Ay$. A variety of characterizations of $\prec_d$ 
and $d$-stochastic matrices can be found in the work of Joe \cite{Joe90} and in 
Prop.~\ref{lemma_char_d_vec} below.

Certainly, the concept of classical majorization as first introduced by Muirhead \cite{Muirhead02} and more widely
 spread by Hardy, Littlewood, and P{\'o}lya \cite{Hardy52} is a special case of $d$-majorization.
 More precisely, one says that a vector $x\in\mathbb R^n$ is classically majorized by $y\in\mathbb R^n$, denoted
 by $x\prec y$, if $\sum\nolimits_{i=1}^n x_i=\sum\nolimits_{i=1}^n y_i$ and $\sum\nolimits_{i=1}^j x_{[i]}\leq \sum\nolimits_{i=1}^j y_{[i]}$
for all $j=1,\ldots,n-1$, where $x_{[i]},y_{[i]}$ are the components of $x,y$ in decreasing order.
This is well known to be equivalent to the existence of a doubly-stochastic matrix $A$, that is, a $d$-stochastic matrix
with $d=(1,\ldots,1)^\top $, such that $x=Ay$ \cite[Thm.~46]{Hardy52}. A comprehensive survey on classical majorization can be
found in \cite{MarshallOlkin}. For numerous applications in various fields of science we also refer to
\cite{Lorenz05,Dalton20,Parker80,BSH16_alt,OSID17,OSID19}.

%
%
%
Interestingly, for $x,y\in\mathbb R^n$ one has $x\prec y$ if and only if $x$ lies in the convex hull of all permutations of $y$ (as shown in, e.g., \cite{Rado52}, or as a direct consequence of Birkhoff's theorem 
\cite[Ch.~2, Thm.~A.2]{MarshallOlkin}). Therefore the set $\{x\in\mathbb R^n : x\prec y\}$ is a convex polytope with at most $n!$ corners.
Hence it has a half-space description, that is, it can be written as the intersection of finitely many half-spaces
  or equivalently as the solution to finitely many linear (in-)equalities. The precise result as first stated in \cite[Thm.~1]{Dahl10} reads as follows: Given $y\in\mathbb R^n$ one has
$$
\{x\in\mathbb R^n : x\prec y\}=\Big\{x\in\mathbb R^n : \Big(\sum_{j=1}^n x_i=\sum_{j=1}^n y_i\Big)\wedge \Big(\forall_{m\in\{0,1\}^n}\ m^\top x\leq \sum_{i=1}^{m_1+\ldots+m_n}y_{[i]}\Big)\Big\}\,.
$$
This result motivated us to work out a half-space description of ``the'' $d$-majorization polytope to study its extreme points as well as further topological properties.

In the main, this manuscript is concerned with analyzing the preorder structure of $d$-majorization and its geometry.
  It is organized as follows:
First, in Section \ref{sec_dmaj} we extend the list of existing characterizations of $d$-majorization
by a novel one, which allows to check $d$-majorization in finitely many steps (Prop.~\ref{lemma_char_d_vec} 
(vi)), and we identify the minimal and maximal elements of this preorder (Thm.~\ref{prop_1}).
In Section \ref{sec_prelim_vector} we briefly revisit convex polytopes and their 
equivalent descriptions 
%
before studying $d$-majorization from this perspective in Section \ref{sec_d_maj_poly}.
  In particular, the novel characterization of Prop.~\ref{lemma_char_d_vec} allows us to work out a half-space description
of ``the'' corresponding $d$-majorization polytope
(Thm.~\ref{thm_maj_halfspace}) and to prove its continuity with respect to $d$ and $y$ (Thm.~\ref{thm_cont_Md}).
Moreover, we identify its extreme points (Thm.~\ref{thm_Eb_sigma}) and -- as for classical majorization -- we show that
the number of extreme points is always upper bounded by $n!$ (Coro.~\ref{thm_convex_poly}). Finally we conclude
that if the ``initial'' vector $y$ is non-negative, then one of the extreme points majorizes every element of 
the polytope \textit{classically} (Thm.~\ref{theorem_max_corner_maj}).

\section{Characterizations and Preorder Properties of $d$-Majorization}\label{sec_dmaj}
For the purpose of this paper be aware of the following notions and notations:
\begin{itemize}
\item In accordance with Marshall and Olkin \cite{MarshallOlkin}, $\mathbb R_+^n$ ($\mathbb R_{++}^n$) denotes the set of all real vectors with non-negative (strictly positive) entries. Whenever it is clear that $x$ is a real vector of length $n$ we occasionally write $x\geq 0$ ($x>0$) to express non-negativity (strict positivity) of its entries.
\item $\unitvector$ shall denote the column vector of ones, i.e.~$\unitvector=(1,\ldots,1)^\top $.
\item $\|\cdot\|_1$ is the usual $1$-norm on $\mathbb R^n$ (or $\mathbb C^n$).
\item $S_n$ is the symmetric group (the group of all permutations of $\{1,\ldots,n\}$).
\item The standard simplex $\Delta^{n-1}\subseteq\mathbb R^n$ is given by the convex hull of all standard basis vectors $e_1,\ldots,e_n$ and precisely contains all probability vectors, that is, all vectors $x\in\mathbb R_+^n$ with $\unitvector^{\top}x=1$.
\end{itemize}
Having reviewed classical vector majorization as well as its polytope properties in the introduction, let us now dive into the ``non-symmetric'' case of majorization, that is, the case where doubly stochastic matrices (having $\unitvector$ as ``left''
and ``right'' fixed point) are replaced by $d$-stochastic matrices (having $\unitvector$ and $d \in \mathbb R_{++}^n$ as ``left''
and ``right'' fixed point, respectively).
As also explained in the introduction this concept is closely related to thermo-majorization and quantum thermodynamics in general, and we will occasionally point out some connections to the physics literature if appropriate. For the following definition we mostly\footnote{
Usually $d$-stochastic matrices are defined via $d^\top A=d^\top $ and $A\unitvector=\unitvector$ which is equivalent to the definition below as it only differs by transposing once. This is because we consider $d,x,y$ to be usual column vectors whereas \cite{Joe90,MarshallOlkin} consider row vectors.\label{footnote_transpose_def}
}
follow \cite[p.~585]{MarshallOlkin}. 
\begin{defi}\label{defi_d_stochastic_matrix}
Let $d\in\mathbb R_{++}^n$ and $x,y\in\mathbb R^n$ be given. A square matrix $A\in\mathbb R^{n\times n}$ is said to be \textit{column-stochastic} if $A_{ij}\geq 0$ for all $i,j=1,\ldots,n$ (i.e.~$A\in\mathbb R_+^{n\times n}$) and $\unitvector^{\top}A=\unitvector^{\top}$. If, additionally, $Ad=d$ then $A$ is said to be \textit{$d$-stochastic}. The set of all $d$-stochastic $n\times n$ matrices is denoted by $ s_d(n)$.
Moreover, $x$ is said to be \textit{$d$-majorized} by $y$, denoted by $x\prec_d y$, if there exists $A\in s_d(n)$ such that $x=Ay$.
\end{defi}
In particular, $x\prec_d y$ implies
$
\unitvector^{\top}x=\unitvector^{\top}Ay=\unitvector^{\top}y
$. Also note that this definition of $\prec_d$ naturally generalizes to complex vectors, cf.~also \cite{Goldberg77}.
\begin{rmk}\label{rem_maj_vector}
\begin{itemize}
\item[(i)] For any $d\in\mathbb R_{++}^n$, the set $ s_d(n)$ constitutes a convex, compact subsemigroup of $\mathbb C^{n\times n}$ with identity element $\identity_n$. In particular it acts contractively in the $1$-norm:
for all $z\in\mathbb R^n$ and $A\in\mathbb R_+^{n\times n}$ with $\unitvector^{\top}A=\unitvector^{\top}$ one has the estimate
\begin{equation}\label{eq:doubly_stoch_trace_norm}
\|Az\|_1=\sum_{i=1}^n\Big|\sum_{j=1}^n A_{ij}z_j\Big|\leq \sum_{i,j=1}^n A_{ij}|z_j|=\sum_{j=1}^n \Big(\sum_{i=1}^n A_{ij}\Big) |z_j|=\sum_{j=1}^n|z_j|=\|z\|_1\,.
\end{equation}
\item[(ii)] 
 Note that $d$-majorization is a special case of so-called matrix majorization: Given matrices $A\in\mathbb R^{m\times p}$, $B\in\mathbb R^{n\times p}$ -- keeping in mind footnote \ref{footnote_transpose_def} -- one says $A$ majorizes $B$ (denoted by $A\prec B$) if there exists $X\in\mathbb R_+^{m\times n}$ with $\unitvector^\top X=\unitvector^\top $ such that $XB=A$ \cite{Dahl99a,Dahl99b}. 
With this one recovers $d$-majorization by setting $B=(d\ y)$, $A=(d\ x)$ because then $A\prec B$ holds iff $x\prec_dy$ as is readily verified.
\item[(iii)] By Minkowski's theorem \cite[Thm.~5.10]{Brondsted83}, the previous point implies that $s_d(n)$ can be written as the convex hull of its extreme points. However---unless $d=\unitvector$---this does not prove to be all too helpful as stating said extreme points (for $n>2$) becomes quite delicate\footnote{The number of extreme points of $s_d(n)$ is lower bounded by $n!$ and upper bounded by $\binom{n^2}{2n-1}$, cf.~\cite[Rem.~4.5]{Joe90}.}. To substantiate this the extreme points for $n=3$ and non-degenerate $d\in\mathbb R_{++}^3$ can be found in Lemma \ref{lemma_extreme_points} (\ref{app_a}).
\item[(iv)] If some entries of the $d$-vector coincide, then $\prec_d$ is known to be a preordering but not a partial ordering. Contrary to what is written in \cite[Rem.~4.2]{Joe90} this in general does \textit{not} change if the entries of $d$ are pairwise distinct: To see this, consider $d=(3,2,1)^\top $, $x=(1,0,0)^\top $, $y=(0,\tfrac23,\tfrac13)^\top $, and
$$
A=\begin{pmatrix} 0&1&1\\ \frac23&0&0\\\frac13&0&0 \end{pmatrix}\in s_d(3)\,.
$$
Then $Ax=y$ and $Ay=x$ so $x\prec_d y\prec_d x$, but obviously $x\neq y$.
\end{itemize}
\end{rmk}


Now let us summarize the known characterizations of $\prec_d$: While the equivalences of (i) through (v) in the following proposition are due to Joe \cite[Thm.~2.2]{Joe90}, number (vi) will be a new result of ours. Moreover, (vii) is related to the definition most prominent among the physics literature, called ``thermo-majorization curves'' \cite{Horodecki13}. Indeed the criterion (vii) we present here is a more explicit version of \cite[Thm.~4]{Alhambra16}; for more on this, cf.~Ch.~\ref{sec_d_poly_analysis}.
\begin{proposition}\label{lemma_char_d_vec}
Let $d\in\mathbb R_{++}^n$ and $x,y\in\mathbb R^n$ be given. The following are equivalent.
\begin{itemize}
\item[(i)] $x\prec_dy$
\item[(ii)] $\sum_{j=1}^n d_j \psi(\frac{x_j}{d_j})\leq \sum_{j=1}^n d_j \psi(\frac{y_j}{d_j})$ for all continuous, convex functions $\psi:D(\psi)\subseteq\mathbb R\to\mathbb R$ such that $\{\frac{x_j}{d_j} : j=1,\ldots,n\},\{\frac{y_j}{d_j} : j=1,\ldots,n\}\subseteq D(\psi)$.
\item[(iii)] $\sum_{j=1}^n (x_j-td_j)_+\leq\sum_{j=1}^n (y_j-td_j)_+$ for all $t\in\mathbb R$ where $(\cdot)_+:=\max\{\cdot,0\}$.
\item[(iv)] $\sum_{j=1}^n (x_j-td_j)_+\leq\sum_{j=1}^n (y_j-td_j)_+$ for all $t\in\{ \frac{x_i}{d_i},\frac{y_i}{d_i} : i=1,\ldots,n \}$.
\item[(v)] $\|x-td\|_1\leq\|y-td\|_1$ (i.e.~$\sum_{j=1}^n |x_j-td_j|\leq\sum_{j=1}^n |y_j-td_j|$) for all $t\in\mathbb R$.
\item[(vi)] $\unitvector^{\top}x=\unitvector^{\top}y$ and $\|x-\frac{y_i}{d_i}d\|_1\leq\|y-\frac{y_i}{d_i}d\|_1$ for all $i=1,\ldots,n $.
\item[(vii)] $\unitvector^{\top}x=\unitvector^{\top}y$ and for all $j=1,\ldots,n-1 $
$$
\sum\nolimits_{i=1}^jx_{\sigma(i)}\leq \min_{i=1,\ldots,n}\Big( \unitvector^{\top}\Big(y-\frac{y_i}{d_i}d\Big)_++\frac{y_i}{d_i}\Big( \sum\nolimits_{k=1}^jd_{\sigma(k)}\Big)\Big)
$$
where $\sigma\in S_n$ is any permutation such that $\frac{x_{\sigma(1)}}{d_{\sigma(1)}}\geq\ldots\geq \frac{x_{\sigma(n)}}{d_{\sigma(n)}}$. 
\end{itemize}
\end{proposition}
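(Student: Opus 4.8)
The plan is to establish the chain of equivalences by a mix of reductions to known facts (Joe's results for (i)–(v)) and direct arguments for the new parts (vi) and (vii). Since (i)$\Leftrightarrow$(ii)$\Leftrightarrow\cdots\Leftrightarrow$(v) is cited, the work lies in (a) weaving (vi) into the chain and (b) proving (vii). I would organize the argument as a cycle: (v)$\Rightarrow$(vi)$\Rightarrow$(vii)$\Rightarrow$(i), together with the trivial observation that (i) implies $\unitvector^Tx=\unitvector^Ty$ (already noted after Definition~\ref{defi_d_stochastic_matrix}), which is needed as a hypothesis in both (vi) and (vii).

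First, (v)$\Rightarrow$(vi). The trace equality $\unitvector^Tx=\unitvector^Ty$ follows because (v) holds for $t$ with $|t|$ large: for $t$ below $\min_j x_j/d_j$ and $\min_j y_j/d_j$ one has $\|x-td\|_1=\unitvector^Tx-t\,\unitvector^Td$ and similarly for $y$, so letting $t\to-\infty$ in $\|x-td\|_1\le\|y-td\|_1$ after subtracting $-t\,\unitvector^Td$ forces $\unitvector^Tx\le\unitvector^Ty$; doing the same as $t\to+\infty$ gives the reverse inequality. The remaining norm inequalities in (vi) are simply (v) specialized to the finite set $t\in\{y_i/d_i\}$. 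Next, (vi)$\Rightarrow$(vii): here I would use the elementary identity relating the $1$-norm, the positive part, and the linear functional,
\[
\|z-td\|_1 \;=\; 2\,\unitvector^T(z-td)_+ \;-\;\unitvector^Tz\;+\;t\,\unitvector^Td ,
\]
valid for any $z\in\mathbb R^n$ and any $t\in\mathbb R$. Applied to both $x$ and $y$ and using $\unitvector^Tx=\unitvector^Ty$, the inequality $\|x-(y_i/d_i)d\|_1\le\|y-(y_i/d_i)d\|_1$ becomes exactly $\unitvector^T(x-(y_i/d_i)d)_+\le\unitvector^T(y-(y_i/d_i)d)_+$, i.e.\ (vi) is equivalent to (iv) restricted to $t\in\{y_i/d_i\}$. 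Then for any partial sum one estimates, with $t=y_i/d_i$,
\[
\sum_{k=1}^j x_{\sigma(k)} \;=\; \sum_{k=1}^j\big(x_{\sigma(k)}-t\,d_{\sigma(k)}\big)\;+\;t\sum_{k=1}^j d_{\sigma(k)}
\;\le\; \unitvector^T(x-td)_+ \;+\; t\sum_{k=1}^j d_{\sigma(k)} ,
\]
because each summand $x_{\sigma(k)}-t\,d_{\sigma(k)}$ is at most its own positive part and the full sum of positive parts dominates any sub-collection; now bound $\unitvector^T(x-td)_+\le\unitvector^T(y-td)_+$ and minimize over $i$ to obtain (vii).

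The genuinely substantive direction is (vii)$\Rightarrow$(i) (equivalently (vii)$\Rightarrow$(iv)). Given (vii), fix $t\in\{x_i/d_i\}\cup\{y_i/d_i\}$ and choose the permutation $\sigma$ sorting $x_{\sigma(1)}/d_{\sigma(1)}\ge\cdots\ge x_{\sigma(n)}/d_{\sigma(n)}$; let $j$ be the largest index with $x_{\sigma(j)}/d_{\sigma(j)}\ge t$ (so the positive parts of $x-td$ are supported on $\sigma(1),\dots,\sigma(j)$). Then $\sum_{\ell=1}^n(x_\ell-td_\ell)_+=\sum_{k=1}^j(x_{\sigma(k)}-t\,d_{\sigma(k)})=\sum_{k=1}^j x_{\sigma(k)}-t\sum_{k=1}^j d_{\sigma(k)}$, and plugging in the bound from (vii) for that same $j$ and evaluating the minimum there at the particular index giving $y_i/d_i=t$ (when $t$ is of that form) yields $\sum_{\ell}(x_\ell-td_\ell)_+\le\unitvector^T(y-td)_+$, which is (iv) at that $t$. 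The care needed is (1) handling the boundary cases $j=0$ (all of $x-td$ non-positive, so the left side is $0$ and nothing to prove) and $j=n$ (use the trace equality: $\sum_\ell(x_\ell-td_\ell)=\unitvector^Tx-t\,\unitvector^Td=\unitvector^Ty-t\,\unitvector^Td$, and $\unitvector^T(y-td)_+\ge\unitvector^T(y-td)$), and (2) dealing with values $t=x_i/d_i$ that need not appear among the $y_i/d_i$ inside the minimum in (vii) — for those one argues that the minimum over $i$ in (vii) is in fact attained as a concave-piecewise-linear lower envelope, so the bound at $t=x_i/d_i$ follows by evaluating the envelope, or alternatively one checks that the function $t\mapsto\unitvector^T(y-td)_+$ (piecewise linear, convex, breakpoints at $y_i/d_i$) coincides with $t\mapsto\min_i[\unitvector^T(y-(y_i/d_i)d)_+ + (y_i/d_i)\,\unitvector^Td - t\,\unitvector^Td]$ shifted appropriately. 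This last identification — that (vii)'s right-hand side, as a function of $t$ through the slot $\sum_k d_{\sigma(k)}$, reconstructs the thermo-majorization curve of $y$ — is the main obstacle; once it is in place, (vii) says precisely that the curve of $x$ lies below that of $y$, which by (iv)$\Leftrightarrow$(i) closes the loop.
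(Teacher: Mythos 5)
Your architecture---cite (i)--(v), then prove (v)$\Rightarrow$(vi)$\Rightarrow$(vii)$\Rightarrow$(i)---is sound and genuinely different from the paper's route: the paper proves (vi)$\Rightarrow$(v) directly by comparing the convex map $t\mapsto\|x-td\|_1$ with the piecewise linear $t\mapsto\|y-td\|_1$ at the latter's breakpoints (Lemma \ref{lemma_convex_cpl_compare}), and it obtains (i)$\Leftrightarrow$(vii) only later via the polytope machinery ($x\prec_d y$ iff $M_d(x)\subseteq M_d(y)$ from Lemma \ref{lemma_closure_op}, the $\mathscr H$-description of Theorem \ref{thm_maj_halfspace}, and the curve properties in Lemmas \ref{lemma_properties_of_f_min} and \ref{lemma_maj_sum_recovery}), whereas you stay entirely at the level of positive-part sums. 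Your steps (v)$\Rightarrow$(vi) and (vi)$\Rightarrow$(vii) are correct: the identity $\|z-td\|_1=2\,\unitvector^T(z-td)_+-\unitvector^Tz+t\,\unitvector^Td$ together with the trace equality, and the fact that a partial sum $\sum_{k=1}^j(x_{\sigma(k)}-td_{\sigma(k)})$ is dominated by $\unitvector^T(x-td)_+$, do exactly what you claim.

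The gap is in (vii)$\Rightarrow$(iv) for $t=x_i/d_i$ not of the form $y_k/d_k$. What you need there is $\min_{i}\big[\unitvector^T\big(y-\tfrac{y_i}{d_i}d\big)_++\tfrac{y_i}{d_i}c\big]\le \unitvector^T(y-td)_++tc$ for your fixed $c=\sum_{k=1}^j d_{\sigma(k)}$ and this $t$. Your ``explicit'' fallback---that $\unitvector^T(y-td)_+$ equals $\min_i\big[\unitvector^T\big(y-\tfrac{y_i}{d_i}d\big)_++\tfrac{y_i}{d_i}\unitvector^Td-t\,\unitvector^Td\big]$ up to a shift---cannot be right as written, since that expression is affine in $t$ while the left-hand side is genuinely piecewise linear. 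Your first suggestion is the correct one but must be carried out: for $0<c<\unitvector^Td$ (i.e.\ $1\le j\le n-1$) the map $g_c(t)=\unitvector^T(y-td)_++tc$ is convex, piecewise linear with breakpoints only at the points $y_i/d_i$, and has slope $c-\unitvector^Td<0$ near $-\infty$ and $c>0$ near $+\infty$; hence it attains its global minimum at a breakpoint, so $\min_i g_c(y_i/d_i)=\min_{t\in\mathbb R}g_c(t)\le g_c(t)$ for every $t$, which is exactly the missing bound (this is, in essence, the computation made explicit in Lemma \ref{lemma_maj_sum_recovery}). With this short argument inserted---and keeping the trace equality you carry from (vii) available when you invoke (iv)$\Rightarrow$(ii)$\Rightarrow$(i), where it is actually used---your proof closes, and it is arguably more elementary than the paper's detour through the $\prec_d$-polytope.
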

\begin{proof}
(v) $\Rightarrow$ (vi): For $t$ large enough all entries of $x-td,y-td$ are non-positive so
$$
-\unitvector^{\top}(x-td)=\|x-td\|_1\leq \|y-td\|_1=-\unitvector^{\top}(y-td)
$$
and thus $\unitvector^{\top}x\geq \unitvector^{\top}y$. Doing the same for $-t$ large enough gives $\unitvector^{\top}x\leq \unitvector^{\top}y$ so together $\unitvector^{\top}x= \unitvector^{\top}y$.

(vi) $\Rightarrow$ (v): Define $P:=\{ \frac{x_i}{d_i},\frac{y_i}{d_i} : i=1,\ldots,n \}$; w.l.o.g.~$|P|>1$. As argued before, $\unitvector^{\top}x=\unitvector^{\top}y$ implies $\|x-td\|_1=\|y-td\|_1$ on $t\in(-\infty,\min P]\cup[\max P,\infty)$. Now define
\begin{align*}
g_x:[\min P,\max P]&\to\mathbb R_+\\
t&\mapsto \|x-td\|_1=\sum\nolimits_{i=1}^n d_i \Big|\frac{x_i}{d_i}-t\Big|
\end{align*}
and $g_y$ analogously. Thus all that is left to show is $g_x(t)\leq g_y(t)$ for all $t\in[\min P,\max P]$.

First note that $g_x(
\min P)=g_y(\min P)$, $g_x(\max P)=g_y(\max P)$, and $g_x(\frac{y_i}{d_i})
\leq g_y(\frac{y_i}{d_i})$ for all $i=1,\ldots,n$ by assumption, meaning we can choose $t\in 
(\min P,\max P)\setminus\{\frac{y_1}{d_1},\ldots,\frac{y_n}{d_n}\}$. Hence there exists $i=0,\ldots,n$ such that $
\frac{y_i}{d_i}<t<\frac{y_{i+1}}{d_{i+1}}$ where $\frac{y_0}{d_0}:=\min P$, $\frac{y_{n+1}}{d_{n+1}}:=\max P$. Defining $\lambda:=(\frac{y_{i+1}}{d_{i+1}}-t)/(\frac{y_{i+1}}{d_{i+1}}-\frac{y_i}{d_i})\in(0,1)$ and using convexity of $g_x$ we compute
\begin{align*}
g_x(t)=g_x\Big(\lambda\frac{y_i}{d_i}+(1-\lambda)\frac{y_{i+1}}{d_{i+1}}\Big)&\leq \lambda g_x\Big(\frac{y_i}{d_i}\Big)+(1-\lambda)g_x\Big(\frac{y_{i+1}}{d_{i+1}}\Big)\\
&\leq\lambda g_y\Big(\frac{y_i}{d_i}\Big)+(1-\lambda)g_y\Big(\frac{y_{i+1}}{d_{i+1}}\Big)\\
&=g_y\Big(\lambda\frac{y_i}{d_i}+(1-\lambda)\frac{y_{i+1}}{d_{i+1}}\Big)=g_y(t)\,.
\end{align*}
In the last line we used that $g_y$ is affine linear on each interval $[\frac{y_i}{d_i},\frac{y_{i+1}}{d_{i+1}}]$.

As stated before, the equivalence of (i) through (v) is due to \cite[Thm.~2.2]{Joe90}. However for the sake of this work being self-contained (and possibly filling some gaps in the literature) let us show a proof, or at least sketch the ideas. First of all (ii) $\Rightarrow$ (iii) $\Rightarrow$ (iv) is obvious.

(i) $\Rightarrow$ (v): There exists $A\in s_d(n)$ which maps $y$ to $x$ so $A(y-td)=Ay-tAd=x-td$ for all $t\in\mathbb R$, hence (v) is a direct consequence of \eqref{eq:doubly_stoch_trace_norm}.

(v) $\Rightarrow$ (iv): Because $\unitvector^{\top}x=\unitvector^{\top}y$, just as in the proof of Lemma \ref{lemma_trace_norm_ball_maj} trace equality and trace norm inequality implies the inequality for the positive part of the vectors.

(iv) $\Rightarrow$ (ii): Let a continuous convex function $\psi:D(\psi)\subseteq\mathbb R\to\mathbb R$ be given such that $P:=\{ \frac{x_j}{d_j},\frac{y_j}{d_j} : j=1,\ldots,n \}\subseteq D(\psi)$. In particular one can construct a continuous function $\tilde\psi:\mathbb R\to\mathbb R$ such that
\begin{itemize}
\item[$\bullet$] $\tilde\psi(\frac{x_j}{d_j})=\psi(\frac{x_j}{d_j})$ and $\tilde\psi(\frac{y_j}{d_j})=\psi(\frac{y_j}{d_j})$ for all $j=1,\ldots,n$
\item[$\bullet$] $\tilde\psi$ is piecewise linear with change in slope only at the elements of $P$.
\item[$\bullet$] $\tilde\psi$ is convex (evident because $\psi$ is convex).
\end{itemize}
In other words $\tilde\psi$ is the ``piecewise linearization'' of $\psi$ (with respect to $P$). Thus it suffices to prove (ii) for all such $\tilde\psi$ because then
$$
\sum\nolimits_{j=1}^n d_j \psi\big(\frac{x_j}{d_j}\big)=\sum\nolimits_{j=1}^n d_j \tilde\psi\big(\frac{x_j}{d_j}\big)\leq \sum\nolimits_{j=1}^n d_j \tilde\psi\big(\frac{y_j}{d_j}\big)= \sum\nolimits_{j=1}^n d_j \psi\big(\frac{y_j}{d_j}\big)
$$

Now let $\phi:\mathbb R\to\mathbb R$ continuous, convex and piecewise linear (with respect to $P$) be given. Then $\phi$ can be written as a (non-negative) linear combination of the maps\footnote{
This is true up to an affine linear map which due to $\unitvector^{\top}x=\unitvector^{\top}y$ yields equality in (ii), thus can be disregarded.
} $\{\phi_p : p\in P\}$ where $\phi_p(t):=(p-t)_+$. But all $\phi_p$ satisfy (ii) by assumption, hence $\phi$ does as well.

(ii) $\Rightarrow$ (i): The idea here is much in the spirit of Kemperman \cite[Thm.~2]{Kemperman75}. Finding $A\in\mathbb R_+^{n\times n}$ with $\unitvector^{\top}A=\unitvector^{\top}$, $Ad=d$ and $Ay=x$ is equivalent (by vectorization, cf.~\cite[Ch.~2.4]{MN07}) to finding a solution $z\in\mathbb R_+^{n^2}$ to
$$
\begin{pmatrix} y^\top \otimes \identity_n\\d^\top \otimes\identity_n\\\identity_n\otimes \unitvector^{\top} \end{pmatrix}z=\begin{pmatrix} x\\d\\e \end{pmatrix}
$$
where $\otimes$ is the usual Kronecker product \cite[Ch.~2.2]{MN07} and $z=\operatorname{vec} A$. By Farkas' lemma\footnote{Farkas' lemma states that for $m,n\in\mathbb N$, $A\in\mathbb R^{m\times n}$, $b\in\mathbb R^m$, the system of linear equations $Ax=b$ has a solution in $\mathbb R_+^n$ if and only if for all $y\in\mathbb R^m$ which satisfy $A^\top y\leq 0$ one has $b^\top y\leq 0$, refer to \cite[Coro.~7.1.d]{Schrijver86} (when replacing $A,b$ by $-A,-b$).} such a solution exists if (and only if) for all $w\in\mathbb R^{3n}$ which satisfy
\begin{align}
\Big(\begin{pmatrix} y^\top \otimes \identity_n\\d^\top \otimes\identity_n\\\identity_n\otimes \unitvector^{\top} \end{pmatrix}^\top \begin{pmatrix} \vec{w_1}\\\vec{w_2}\\\vec{w_3} \end{pmatrix}\Big)_{n(j-1)+k}&=\Big(\begin{pmatrix} (y\otimes I_n)\vec{w_1}&(d\otimes I_n)\vec{w_2}&(I_n\otimes e)\vec{w_3}\end{pmatrix}\Big)_{n(j-1)+k}\notag\\
&=y_jw_k+d_jw_{n+k}+w_{2n+j}\leq 0\label{eq:w_y_ineq}
\end{align}
for all $j,k=1,\ldots,n$ one has
$$
\sum\nolimits_{j=1}^n (x_jw_j+d_jw_{n+j}+w_{2n+j})\leq 0\,.
$$
Consider the convex (because affine linear) functions $\psi_j:\mathbb R\to\mathbb R$, $t\mapsto w_jt+w_{n+j}$ for all $j=1,\ldots,n$. Then
$$
\psi:\mathbb R\to\mathbb R\qquad t\mapsto\max_{j=1,\ldots,n}\psi_j(t)
$$
is convex and continuous as well so by assumption and because $d>0$
\begin{align*}
\sum\nolimits_{j=1}^n (x_jw_j+d_jw_{n+j}+w_{2n+j})&=\sum\nolimits_{j=1}^n d_j\psi_j(\tfrac{x_j}{d_j})+w_{2n+j}\\
&\leq \sum\nolimits_{j=1}^n d_j\psi(\tfrac{x_j}{d_j})+w_{2n+j}\\
&\leq \sum\nolimits_{j=1}^n d_j\psi(\tfrac{y_j}{d_j})+w_{2n+j}\,.
\end{align*}
But now for every $j=1,\ldots,n$ exists $k=k(j)$ such that $\psi(\tfrac{y_j}{d_j})=\psi_{k(j)}(\tfrac{y_j}{d_j})$ by definition of $\psi$ (the maximum has to be attained by at least one of the $\psi_k$). Hence
\begin{align*}
\sum\nolimits_{j=1}^n (x_jw_j+d_jw_{n+j}+w_{2n+j})&\leq \sum\nolimits_{j=1}^n d_j\psi(\tfrac{y_j}{d_j})+w_{2n+j}\\
&= \sum\nolimits_{j=1}^n d_j\psi_{k(j)}(\tfrac{y_j}{d_j})+w_{2n+j}\\
&=\sum\nolimits_{j=1}^n \underbrace{y_jw_{k(j)}+d_jw_{n+k(j)}+w_{2n+j}}_{\leq 0\text{ by }\eqref{eq:w_y_ineq}}\leq 0
\end{align*}
so we are done. Note that we needed access to not all, but only to the piecewise linear convex functions---this is the same effect as in the proof of (iv) $\Rightarrow$ (ii).

(i) $\Leftrightarrow$ (vii): This will be a direct consequence of our considerations in Section \ref{sec_d_maj_poly} so we will postpone this part of the proof to Section \ref{sec_d_poly_analysis}. Note that we will not use this result anywhere in the paper, meaning we are not at risk to run into a circular argument.
\end{proof}

\begin{rmk}
\begin{itemize}
\item[(i)] 
    In terms of numerics, Prop.~\ref{lemma_char_d_vec} (vi) is the most efficient one for checking $d$-majorization as it encapsulates at most $n+1$ constraints one has to verify. In contrast, the definition of $d$-majorization, i.e.~finding a $d$-stochastic matrix which maps $y$ to $x$
    boils down to a linear programming problem which is by far not as easy to check as condition (vi). The other conditions from Prop.~\ref{lemma_char_d_vec} consist of either uncountably many constraints (conditions (ii), (iii), and (v)), or at most $2n$ (condition (iv)) or $n^2$ steps (condition (vii), because each of the $n-1$ constraints features a minimum over $n$ numbers which one has to compute before checking the actual constraint).
\item[(ii)] In the physics lecture, also connected to the notion of thermo-majorization, a freshly published result \cite{Shiraishi20} gives a direct proof of (vii) $\Rightarrow$ (i) from Prop.~\ref{lemma_char_d_vec}. Given two vectors $x,y\in\mathbb R^n$ of the same ``trace'' such that $y$ thermo-majorizes $x$, Shiraishi gave a constructive algorithm for a $d$-stochastic matrix which sends $y$ to $x$.
\end{itemize}
\end{rmk}

Recently Alhambra et al.~\cite{Alhambra19} were able to find conditions under which classical majorization implies $d$-majorization for $d$ from some parameter range
. As their result was obtained in the context of dephasing thermalization let us reformulate it by casting it into our notation:
\begin{proposition}
The following statements hold.
\begin{itemize}
\item[(i)] Let $x,y\in\mathbb R^n$ and $d\in\mathbb R_{++}^n$ be given. If $x,y$ are similarly $d$-ordered, i.e.~there exists a permutation $\sigma\in S_n$ such that $\frac{x_{\sigma(1)}}{d_{\sigma(1)}}\geq\ldots\geq\frac{x_{\sigma(n)}}{d_{\sigma(n)}}$ and $\frac{y_{\sigma(1)}}{d_{\sigma(1)}}\geq\ldots\geq \frac{y_{\sigma(n)}}{d_{\sigma(n)}}$, then $x\prec_d y$ holds if and only if $x\prec y$.
\item[(ii)] Let $y\in\mathbb R_+^n$ and $d\in\mathbb R_{++}^n$. If $y_1\leq\ldots\leq y_n$ and $d_1\geq\ldots\geq d_n$, then $\underline{\sigma}y\prec_d y$ for all $\sigma\in S_n$.
\item[(iii)] Let $y\in\mathbb R_+^n$ with $y_1\leq\ldots\leq y_n$ be given. Then for all $x\in\mathbb R^n$ one has $x\prec y$ if and only if $x\prec_d y$ for all $d\in\mathbb R_{++}^n$ with $d_1\geq\ldots\geq d_n$.
\end{itemize}
\end{proposition}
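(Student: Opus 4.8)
The plan is to prove (ii) by a direct estimate, to obtain (iii) as a short consequence of it, and to treat (i) via the thermo-majorization-curve criterion.

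\emph{Part (ii).} I would apply Proposition~\ref{lemma_char_d_vec}(v): it suffices to check that $\|\underline{\sigma}y-td\|_1\le\|y-td\|_1$ for every $t\in\mathbb R$ and every $\sigma\in S_n$. If $t\le 0$, then $y\ge 0$ and $d>0$ force all entries of $y-td$ and of $\underline{\sigma}y-td$ to be non-negative, so both sides equal $\unitvector^Ty-t\,\unitvector^Td$ and equality holds. If $t>0$, then $td_1\ge\dots\ge td_n$ while $y_1\le\dots\le y_n$, so the pairing occurring in $\|y-td\|_1=\sum_j|y_j-td_j|$ is the ``oppositely sorted'' one, which \emph{maximizes} $\sum_j|y_{\pi(j)}-td_j|$ over all rearrangements $\pi$ of the entries of $y$; this is the elementary rearrangement inequality for sums of absolute differences, which reduces by an uncrossing argument to the two-term statement $|a-q|+|a'-p|\ge|a-p|+|a'-q|$ for $a\le a'$, $p\le q$. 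Hence $\|\underline{\sigma}y-td\|_1\le\|y-td\|_1$, proving (ii).

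\emph{Part (iii).} The implication ``$\Leftarrow$'' is immediate, taking $d=\unitvector$ (which is admissible since $\unitvector_1\ge\dots\ge\unitvector_n$) and recalling that $\prec_{\unitvector}$ is ordinary majorization. For ``$\Rightarrow$'', fix $d$ with $d_1\ge\dots\ge d_n$ and set $M_d(y):=\{z\in\mathbb R^n\mid z\prec_d y\}=\{Ay\mid A\in s_d(n)\}$; as $s_d(n)$ is convex (Remark~\ref{rem_maj_vector}(ii)) and $A\mapsto Ay$ is linear, $M_d(y)$ is convex. By (ii) it contains every $\underline{\sigma}y$, hence also $\operatorname{conv}\{\underline{\sigma}y\mid\sigma\in S_n\}$, which by Corollary~\ref{coro_classical_maj_extreme} equals $\{z\mid z\prec y\}$. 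Thus $x\prec y$ implies $x\prec_d y$, for every admissible $d$.

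\emph{Part (i).} Here I would use Proposition~\ref{lemma_char_d_vec}(vii). First reduce to the identity permutation: if $\sigma$ is a common ordering permutation of $x$ and $y$, then $x=Ay$ with $A\in s_d(n)$ if and only if $\underline{\sigma}x=(\underline{\sigma}\,A\,\underline{\sigma}^{-1})\underline{\sigma}y$ with $\underline{\sigma}\,A\,\underline{\sigma}^{-1}\in s_{\underline{\sigma}d}(n)$, and ordinary majorization is unchanged by permuting $x$ and $y$; replacing $(x,y,d)$ by $(\underline{\sigma}x,\underline{\sigma}y,\underline{\sigma}d)$ we may therefore assume $x_1/d_1\ge\dots\ge x_n/d_n$ and $y_1/d_1\ge\dots\ge y_n/d_n$. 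In this arrangement the thermo-majorization curve of $y$ is the concave, piecewise-linear graph through the points $\big(\sum_{i\le k}d_i,\sum_{i\le k}y_i\big)$, so its value at the abscissa $\sum_{i\le k}d_i$ is exactly $\sum_{i\le k}y_i$; hence Proposition~\ref{lemma_char_d_vec}(vii) reduces $x\prec_d y$ to the conditions $\unitvector^Tx=\unitvector^Ty$ and $\sum_{i\le k}x_i\le\sum_{i\le k}y_i$ for $k=1,\dots,n-1$. Comparing with the definition of classical majorization---using that the common ordering also realizes the decreasing rearrangements of $x$ and $y$---yields $x\prec_d y\iff x\prec y$.

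The step I expect to be the main obstacle is this last comparison in (i): identifying the ``$d$-ordered'' partial sums $\sum_{i\le k}x_{\sigma(i)}$ with the partial sums of the decreasing rearrangement of $x$, and likewise for $y$. This is exactly where the hypothesis that $x$ and $y$ are \emph{jointly} similarly $d$-ordered must be used in full force, and where one has to track carefully the interplay between the orderings of $x$, $y$ and $d$. A secondary point requiring care is the sign-dependent case split in (ii), in particular fixing the direction of the rearrangement inequality for $t>0$.
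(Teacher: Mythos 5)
Your proofs of (ii) and (iii) are correct, and both take genuinely different routes from the paper's. For (ii) the paper simply points to the explicit generalized T-transform construction of Alhambra et al., whereas you verify criterion (v) of Proposition~\ref{lemma_char_d_vec} directly: the case $t\le 0$ collapses both norms to $\unitvector^Ty-t\,\unitvector^Td$ because $y\ge 0$, $d>0$, and for $t>0$ the identity pairing of the increasing $y$ with the decreasing $td$ is indeed the maximizing arrangement by the two-term uncrossing inequality, so $\|\underline{\sigma}y-td\|_1\le\|y-td\|_1$ for all $\sigma$. This is self-contained and arguably cleaner than a citation. For (iii) the paper goes through (i), (ii) and transitivity of $\prec_d$; you instead use convexity of $M_d(y)$ (linear image of the convex set $s_d(n)$) together with Corollary~\ref{coro_classical_maj_extreme} to get $\{x\,|\,x\prec y\}=\operatorname{conv}\{\underline{\sigma}y\,|\,\sigma\in S_n\}\subseteq M_d(y)$. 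That argument is correct and has the additional merit of not relying on (i) at all.

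Part (i) is where the genuine gap sits, and it is exactly the step you flagged. Your reduction is fine: conjugating by the common permutation one may assume $x_1/d_1\ge\dots\ge x_n/d_n$ and $y_1/d_1\ge\dots\ge y_n/d_n$, and then Proposition~\ref{lemma_char_d_vec}(vii) together with Lemma~\ref{lemma_properties_of_f_min}(iii) gives: $x\prec_d y$ iff $\unitvector^Tx=\unitvector^Ty$ and $\sum_{i\le k}x_i\le\sum_{i\le k}y_i$ for all $k$. But your closing assertion that ``the common ordering also realizes the decreasing rearrangements of $x$ and $y$'' is false: the permutation sorts $x/d$ and $y/d$, not $x$ and $y$. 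Moreover no cleverer comparison can close this gap under the stated hypothesis alone. Take $d=(1,4)^T$, $y=(2,3)^T$, $x=(\tfrac32,\tfrac72)^T$: then $x/d=(\tfrac32,\tfrac78)$ and $y/d=(2,\tfrac34)$ are both strictly decreasing, so $x,y$ are similarly $d$-ordered via the identity; the column-stochastic matrix with columns $(\tfrac35,\tfrac25)^T$ and $(\tfrac1{10},\tfrac9{10})^T$ is $d$-stochastic and maps $y$ to $x$, so $x\prec_d y$ (equivalently, $x_1\le y_1$ and equal traces, i.e.\ your derived criterion holds); yet $x\not\prec y$ since $x_1^\cdot=\tfrac72>3=y_1^\cdot$. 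So the $d$-ordered partial-sum condition you correctly arrive at is strictly weaker than classical majorization in this generality, and the equivalence asserted in (i) needs a stronger hypothesis to be provable by your route. Note that the paper's own proof of (i) is only a citation to Joe's Corollary 2.5, and in the single place the paper invokes (i)---inside its proof of (iii)---the vectors $\underline{\tau}x$ and $y$ are both increasing while $d$ is decreasing, so there the common $d$-ordering does simultaneously sort both vectors decreasingly; under that additional structure your final identification is valid and your argument for (i) closes. As written, however, your proof of (i) does not go through, and your (independent) proof of (iii) is the safer foundation.
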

\begin{proof}
(i): \cite[Coro.~2.5]{Joe90}. Note that $x\prec_d y$ if and only if $\underline{\sigma}x\prec_{\underline{\sigma}d}\underline{\sigma}y$ for all $\sigma\in S_n$ as is readily verified, so it suffices to have $x,y$ similarly $d$-ordered. (ii): One can explicitly write down generalized T-transforms which first shift $y_1$ to $y_{\sigma(1)}$, then $y_2$ to $y_{\sigma(2)}$, and so on. The details are carried out in \cite[p.~13 \& 14]{Alhambra19}. (iii), $\Leftarrow$: Obvious. (iii), $\Rightarrow$: Let $x\in\mathbb R^n$ with $x\prec y$ be given and let $\tau\in S_n$ be any permutation such that $\underline{\tau}x_1\leq\ldots\leq \underline{\tau}x_n$. Then (ii) implies $\underline{\sigma}\,\underline{\tau}x\prec_d\underline{\tau}x$ for all $\sigma\in S_n$ and all $d\in\mathbb R_{++}^n$ with $d_1\geq\ldots\geq d_n$, so choosing $\sigma=\tau^{-1}$ yields $x\prec_d\underline{\tau}x$. On the other hand $\underline{\tau}x$ and $y$ are similarly $d$-ordered for all such $d$---because $\frac{(\underline{\tau}x)_1}{d_1}\leq\ldots\leq\frac{(\underline{\tau}x)_n}{d_n}$ and $\frac{y_1}{d_1}\leq\ldots\leq \frac{y_n}{d_n}$---so we have $\underline{\tau}x\prec_d y$ by (i). Using that $\prec_d$ is a preorder this yields $x\prec_d\underline{\tau}x\prec_dy$, that is, $x\prec_d y$ as claimed.
\end{proof}

To conclude this section we make some statements about minimal and maximal elements of the preorder $\prec_d$.
\begin{thm}\label{prop_1}
Let $d\in\mathbb R_{++}^n$ be given. The following statements hold.
\begin{itemize}
\item[(i)] $d$ is the unique minimal element within $\lbrace x\in\mathbb R^n : \unitvector^{\top}x=\unitvector^{\top}d\rbrace$ 
with respect to $\prec_d\,$.
\item[(ii)] Let $j,k\in\{1,\ldots,n\}$, $j\neq k$ be given. Then $e_j\prec_d e_k$ if and only if $d_j\geq d_k$.
\item[(iii)] If $k$ is chosen such that $d_k$ is minimal in $d$, then $(\unitvector^{\top}d) e_k$ is maximal in $(\unitvector^{\top}d)\Delta^{n-1}=\lbrace x\in\mathbb R_+^n : \unitvector^{\top}x=\unitvector^{\top}d\rbrace$ with respect to $\prec_d$. It is the unique maximal element in $(\unitvector^{\top}d)\Delta^{n-1}$ with respect to $\prec_d$ if and only if $d_k$ is the unique minimal element of $d$.
\end{itemize}
\end{thm}
\begin{proof}
(i) Consider $d\unitvector^{\top}/(\unitvector^{\top}d)\in s_d(n)$ which maps any $x\in\mathbb R^n$ with $\unitvector^{\top}x=\unitvector^{\top}d$ to $d$ so $d\prec_d x$. Uniqueness is obvious as $d$ is a fixed point of every $d$-stochastic matrix.\smallskip

(ii): Applying Prop.~\ref{lemma_char_d_vec} (vi), $e_j\prec_d e_k$ is equivalent to $\|e_j-\frac{1}{d_k}d\|_1\leq \|e_k-\frac{1}{d_k}d\|_1$ (as the $n-1$ other inequalities read $1\leq 1$ and thus are redundant). But this is satisfied iff
$$
\Big|1-\frac{d_j}{d_k}\Big|+\frac{d_k}{d_k}\leq \frac{d_j}{d_k}+\Big|1-\frac{d_k}{d_k}\Big|\quad\Leftrightarrow\quad \Big|\frac{d_j}{d_k}-1\Big|\leq \frac{d_j}{d_k}-1
$$
which obviously holds iff $\frac{d_j}{d_k}-1\geq 0$, that is, $d_j\geq d_k$.\smallskip

(iii): W.l.o.g.~$\unitvector^{\top}d=1$. Because $d_k\leq d_j$ for all $j=1,\ldots,n$, (ii) implies $e_j\prec_d e_k$ which, using convexity of $\prec_d$, shows maximality of $e_k$. Moreover, if $d_k$ is not the \textit{unique} minimal element---but there exists $i\neq k$ such that $d_i=d_k$---then by the same argument $e_i$ is maximal in $\Delta^{n-1}$ w.r.t.~$\prec_d$, so there exist at least two maximal elements.

Thus all that is left to show is that if $d_k$ is the unique minimal element of $d$, then $e_k\prec_d x$ for any $x\in\Delta^{n-1}$ implies $x=e_k$ meaning there cannot be any maximal element other than $e_k$. Indeed $e_k\prec_d x$ by Prop.~\ref{lemma_char_d_vec} (vi) is equivalent to $\|e_k-\frac{x_i}{d_i}d\|_1\leq\|x-\frac{x_i}{d_i}d\|_1$ for all $i=1,\ldots,n$ which after a straightforward computation reads
\begin{align*}
1+\frac{x_i}{d_i}(1-d_i-d_k)&\leq 1-2\sum_{\{\alpha:\frac{x_\alpha}{d_\alpha}<\frac{x_i}{d_i}\}}x_\alpha +\frac{x_i}{d_i}\Big(1-2d_i-2\sum_{\{\alpha:\frac{x_\alpha}{d_\alpha}>\frac{x_i}{d_i}\}}d_\alpha\Big)\\
&\leq 1 +\frac{x_i}{d_i}(1-2d_i)\leq 1+\frac{x_i}{d_i}(1-d_i-d_k)\,.
\end{align*}
Hence all these inequalities are actually equalities; in particular the last step then implies $x_i=0$ for all $i\neq k$ because $d_i>d_k$ by assumption.
\end{proof}
\begin{rmk}\label{rem_pure_state_transform}
The fact that every $e_1,\ldots,e_n$ is maximal in the standard simplex $\Delta^{n-1}$ for $d=\unitvector$ is lost in the general setting (consider the example from Remark \ref{rem_maj_vector} (iv)).

However, for strictly positive vectors $z\in\mathbb R_{++}^n$ one still has $(\unitvector^{\top}z )e_k\not\prec_d z$ for all $k=1,\ldots,n$. More generally, if $y\prec_d z$ then $y$ has to be strictly positive as well; otherwise the corresponding transformation matrix (non-negative entries) would contain a row of zeros which---due to $d>0$---contradicts $d$ being one of its fixed points. This is a special case of strict positivity of matrix-$D$-majorization \cite[Coro.~4.7]{vomEnde20Dmaj}
\end{rmk}
\section{Descriptions \& Properties of Convex Polytopes}\label{sec_prelim_vector}

Before we investigate what Proposition \ref{lemma_char_d_vec} tells us about sets of the form $\{x\in\mathbb R^n:x\prec_dy\}$ for some $y\in\mathbb R^n$, $d\in\mathbb R_{++}^n$ we need some basic knowledge of convex polytopes.
Usually, convex polytopes are introduced as subsets of $\mathbb R^n$ which can be written as the convex hull of finitely many vectors from $\mathbb R^n$, cf.~\cite[Ch.~7.2]{Schrijver86}, \cite[Ch.~3]{Gruenbaum03}. Now it is well known that such polytopes can be characterized via finitely many affine half-spaces: More precisely a set $P\subset \mathbb R^n$ is a convex polytope if and only if $P$ is bounded and there exist $m\in\mathbb N$, $A\in\mathbb R^{m\times n}$, and $b\in\mathbb R^m$ such that $P=\{x\in\mathbb R^n : Ax\leq b\}$ \cite[Coro.~7.1c]{Schrijver86}. These characterizations of convex polytopes are also known as vertex- and halfspace-description ($\mathscr V$- and $\mathscr H$-description), respectively \cite[Ch.~3.6]{Gruenbaum03}. 
\begin{rmk}\label{rem:halfspace_actions}
Let any $A\in\mathbb R^{m\times n}$, $b,b'\in\mathbb R^m$, and $p\in\mathbb R^n$ be given. The following observations are readily verified:
\begin{align*}
\{x\in\mathbb R^n : Ax\leq b\}\cap \{x\in\mathbb R^n : Ax\leq b'\}&=\{x\in\mathbb R^n : Ax\leq\min\{b,b'\}\}\\
\{x\in\mathbb R^n : Ax\leq b\}+p&=\{x\in\mathbb R^n : Ax\leq b+Ap\}\\
b\leq b'\quad\Leftrightarrow\quad\{x\in\mathbb R^n : Ax\leq b\}&\subseteq \{x\in\mathbb R^n : Ax\leq b'\}\,.
\end{align*}
Here and henceforth, we for simplicity use the convention that $\min$ and $\max$ operate entrywise on vectors, e.g., $\min\{b,b'\}=(\min\{b_j,b_j'\})_{j=1}^m$ for $b,b'\in\mathbb R^m$.
The above results are not too surprising as $A$ in some sense describes the geometry of the polytope which intuitively should not change under the above operations.
\end{rmk}
In the following we are primarily interested in the case where $A$ and $b$ have a very particular structure. After all, in the introduction we have seen that the orientation of the half-spaces which describe classical majorization are precisely given by the collection of all binary vectors $\{0,1\}^n$.
Thus to introduce
 this special structure we define
\begin{equation}\label{eq:M_maj}
 M:={\footnotesize\begin{pmatrix} M_1\\M_2\\\vdots\\M_{n-1}\\\unitvector^{\top}\\-\unitvector^{\top} \end{pmatrix}}
 \in \mathbb R^{2^n\times n}\,,
\end{equation}
where the rows of $M_j\in\mathbb R^{\binom{n}{j}\times n}$
 consist of all elements of $\{0,1\}^n$ which sum up to $j$. The order of these rows can be chosen arbitrarily but shall be
 fixed henceforth.
 
 Moreover, $b\in\mathbb R^{2^n}$ will be partitioned in the same way and our focus will be on the case where
 $b'_{n}+b'_{n+1}=0$ or, equivalently,
$$
b={\footnotesize \begin{pmatrix} b'_1\\\vdots\\b'_{n-1}\\b'_n\\-b'_n\end{pmatrix} }\in\mathbb R^{2^n}\,.
$$
with $b'_j\in\mathbb R^{\binom{n}{j}}$ for all $j=1,\ldots,n$; in particular $b'_n\in\mathbb R$. The last two conditions on $M$ and $b$
 obviously allow us to guarantee that any solution of $Mx \leq b$ satisfies the ``trace condition'' $\unitvector^{\top}x=\sum_{j=1}^nx_j = b'_n$.

\begin{lem}\label{lemma_Mx_b_convex_polytope}
 Let $M$ be the matrix \eqref{eq:M_maj} and let $b\in\mathbb R^{2^n}$ with $b'_{n}+b'_{n+1}=0$ be given.
 If $\{x\in\mathbb R^n : Mx\leq b\}$ is non-empty then it is a convex polytope of dimension at most $n-1$.
\end{lem}
\begin{proof}
Because $b'_n +b'_{n+1}=0$ by assumption, all solutions to $Mx\leq b$ have to satisfy $\unitvector^{\top}x=b'_n $ which reduces the dimension of $\{x\in\mathbb R^n : Mx\leq b\}$ by at least $1$. Now if we can show that $\{x\in\mathbb R^n : Mx\leq 0\}=\{0\}$ then the set in question is bounded (cf.~\cite[Ch.~8.2]{Schrijver86}) so by the above characterization of convex polytopes we are done. Indeed let $x\in\mathbb R^n$ satisfy $Mx\leq 0$. Then $M_1x\leq 0$, implying $x_i\leq 0$ for all $i=1,\ldots,n$. But this together with the ``trace condition'' $\unitvector^{\top}x=0$ shows $x_i=0$ for all $i$, as desired. 
\end{proof}

Now an immediate question is the one concerning the vertices (extreme points) of convex polytopes of the above form. This will be the topic of the remainder of this section.
For this we need a characterization of the extreme points, given a
$\mathscr H$-description of a convex polytope.

To characterize these extreme points in a convenient way we need a formalism to pick rows
 from the matrix $M$ and the corresponding entry of the vector $b$. Since we do not want to fix any ``unnatural''
 order of the rows of $M$, we define an abstract mapping which does this job for us. 
   While this procedure would not be necessary for classical majorization -- because there the vector $b$ is of convenient
   structure -- for studying general $d$-majorization this map will be indispensable.
\begin{defi}\label{def_mathfrak_b}
Let $a\in\{0,1\}^n$, $a\neq 0$ be given. Then the row vector $a$ corresponds to a (unique) row of $M\in\mathbb R^{2^n\times n}$, hence there exist unique $m_1\in\{1,\ldots,n\}$, $m_2\in\{1,\ldots,\binom{n}{m_1}\}$ such that
$a$ is the $m_2$-th row of the submatrix $M_{m_1}$ of $M$ in \eqref{eq:M_maj}. This lets us define
\begin{align*}
\mathfrak m:\{0,1\}^n&\to\mathbb N_0\times\mathbb N_0\\
a&\mapsto\begin{cases}
(0,0)&\text{if }a=0\\
(m_1,m_2)&\text{else}
\end{cases}\,.
\end{align*}
Now given $b\in\mathbb R^{2^n}$ with $b'_n +b'_{n+1}=0$ we can define an analogous mapping
\begin{align*}
\mathfrak b:\{0,1\}^n&\to\mathbb R\\
a&\mapsto\begin{cases}
0&\text{if }a=0\\
(b'_{m_1})_{m_2}&\text{else}
\end{cases}
\end{align*}
where $(m_1,m_2)=\mathfrak m(a)$.
\end{defi}
In other words $Mx\leq b$ then is equivalent to $a^\top x\leq\mathfrak b(a^\top )$ for all $a\in\{0,1\}^n$ together with $\unitvector^\top x=b_n'$. Certainly the maps $\mathfrak m,\mathfrak b$ generalize to matrices $A\in\{0,1\}^{m\times n}$, $b\in\mathbb R^m$ ($m\in\mathbb N$) by applying them to every row of $A$ individually\footnote{For this map to be well-defined we need the assumption that no row of $A$ appears twice. But this is rather natural because one of the two inequalities $a^\top x\leq b$, $a^\top x\leq b'$ is redundant and can be disregarded.}.
\begin{lem}\label{lemma_extreme_points_h_descr}
Let $b\in\mathbb R^{2^n}$ with $b'_n +b'_{n+1}=0$ as well as $p\in\mathbb R^n$ be given such that $Mp\leq b$. The following statements are equivalent.
\begin{itemize}
\item[(i)] $p$ is an extreme point of $\{x\in\mathbb R^n : Mx\leq b\}$.
\item[(ii)] There exists a submatrix $M'\in\mathbb \{0,1\}^{n\times n}$ of $M$---one row of $M'$ being equal to $\unitvector^{\top}$---such that $M'p=\mathfrak b(M')=:b'$ and $\operatorname{rank}M'=n$.
\end{itemize}
\end{lem}
\begin{proof}
%
``(ii) $\Rightarrow$ (i)'': Assume there exist $x_1,x_2\in\{x\in\mathbb R^n : Mx\leq b\}$ and $\lambda\in (0,1)$ such that $p=\lambda x_1+(1-\lambda)x_2$. Then $M'x_1\leq b'$, $M'x_2\leq b'$ by assumption and thus
$$
b'=M'p=\lambda M'x_1+(1-\lambda)M'x_2\leq \lambda b'+(1-\lambda)b'=b'\quad\Rightarrow\quad M'x_1=M'x_2=b'\,.
$$
But $M'\in\mathbb R^{n\times n}$ is of full rank so the system of linear equations $M'y=b$ has a unique solution in $\mathbb R^n$, hence $x_1=x_2=p$ is in fact an extreme point of $\{x\in\mathbb R^n : Mx\leq b\}$.

``(i) $\Rightarrow$ (ii)'': 
Each extreme point $p$ of $\{x\in\mathbb R^n : Mx\leq b\}$ is determined by $n$ linearly independent equations from $Mx=b$ so there exists a submatrix $\hat M\in\mathbb R^{n\times n}$ of $M$ of full rank such that $\hat Mp=\mathfrak b(\hat M)=:\hat b$, cf.~\cite[Thm.~8.4 ff.]{Schrijver86}. If one row of $\hat M$ equals $\unitvector^{\top}$ then we are done. If $\hat M$ features $-\unitvector^{\top}$ replace it by $\unitvector^{\top}$ and flip the sign of the corresponding entry in $\mathfrak b(\hat M)$. Otherwise define
$$
\tilde{M}:={\begin{pmatrix} \hat M\\\unitvector^{\top} \end{pmatrix}}\in\mathbb R^{(n+1)\times n}\quad\text{ and }\quad\tilde{b}:=\mathfrak b(\tilde M)={\begin{pmatrix} \hat b\\b'_n \end{pmatrix}}\in\mathbb R^{n+1}
$$
so $\tilde{M}p=\tilde{b}$ because $p$ satisfies the ``trace condition'' $\unitvector^{\top}p=b'_n$. But this system of linear equations is now overdetermined so there exists a row of $\hat{M}$ which can be replaced by $\unitvector^{\top}$ such that the resulting matrix $M'\in\mathbb \{0,1\}^{n\times n}$---which still satisfies $M'p=\mathfrak b(M')$---has again full rank.
\end{proof}

This enables---in some special cases---an explicit description of the extreme points of the polytope induced by $M$ and $b$.
Henceforth the symbol $\Lambda$ will denote the lower triangular matrix
\begin{equation}\label{eq:Lambda}
\Lambda={\footnotesize\begin{pmatrix}
1&0&\cdots&0\\
\vdots&\ddots&\ddots&\vdots\\
\vdots&&\ddots&0\\
1&\cdots&\cdots&1
\end{pmatrix}}\,.
\end{equation}

\begin{defi}\label{def_Ebsigma}
Let $b\in\mathbb R^{2^n}$ with $b'_n +b'_{n+1}=0$ and arbitrary $\sigma\in S_n$ be given. Denote by $\underline{\sigma}$ the permutation matrix\footnote{Given some permutation $\sigma\in S_n$ the corresponding permutation matrix is 
given by $\sum_{i=1}^n e_i e_{\sigma(i)}^\top $. In particular the identities $(\underline{\sigma}x)_j=x_{\sigma(j)}$ and $\underline{\sigma\circ\tau}=\underline{\tau}\cdot\underline{\sigma}$ hold.\label{footnote_permutation_matrix}} induced by $\sigma$. Then the unique solution to
\begin{equation}\label{eq:E_sigma_unique_sol}
\Lambda\,\underline{\sigma}x=\mathfrak b\big( \Lambda\,\underline{\sigma} \big)=:b'_\sigma
\end{equation}
(with $\Lambda$ from \eqref{eq:Lambda}) shall be denoted by $x=E_b(\sigma)$.
\end{defi} 

Now $E_b(\sigma)$ is of the following simple form.
\begin{lem}\label{lemma_E_b_sigma_properties}
Let $b\in\mathbb R^{2^n}$ with $b'_n +b'_{n+1}=0$, arbitrary $\sigma\in S_n$, as well as $p\in\mathbb R^n$ be given. 
Then for all $j=1,\ldots,n$ and for all $\sigma\in S_n$
\begin{equation}\label{eq:E_sigma_property}
(E_b(\sigma))_{\sigma(j)}=\mathfrak b\big(\sum\nolimits_{i=1}^j e_{\sigma(i)}^\top \big)-\mathfrak b\big(\sum\nolimits_{i=1}^{j-1} e_{\sigma(i)}^\top \big)=(b'_\sigma)_{j}-(b'_\sigma)_{j-1}
\end{equation}
(with $b'_\sigma$ from \eqref{eq:E_sigma_unique_sol}), as well as $E_{b+Mp}(\sigma)=E_b(\sigma)+p$.
\end{lem}
\begin{proof}
The $j$-th row of \eqref{eq:E_sigma_unique_sol} for $x=E_{b}(\sigma)$, $j=1,\ldots,n$ reads
\begin{align*}
\sum\nolimits_{i=1}^j (E_{b}(\sigma))_{\sigma(i)}=\big(\sum\nolimits_{i=1}^j e_{\sigma(i)}^\top \big) E_{b}(\sigma)=\mathfrak b\big(\sum\nolimits_{i=1}^j e_{\sigma(i)}^\top \big)
\end{align*}
which implies \eqref{eq:E_sigma_property}. Also one readily verifies
$$
(b+Mp)'_\sigma=b'_\sigma+\underbrace{\big(\sum\nolimits_{i=1}^j p_{\sigma(i)}\big)_{j=1}^n}_{\text{corresponding entry in }Mp}=b'_\sigma+\Lambda\,\underline{\sigma}p
$$
for any $\sigma\in S_n$, so $E_{b+Mp}(\sigma)=E_b(\sigma)+p$ by uniqueness of the solution of \eqref{eq:E_sigma_unique_sol}. 
\end{proof}

Clearly if $E_b(\sigma)\in\{x\in\mathbb R^n : Mx\leq b\}$ for some $\sigma\in S_n$, then it is an extreme point by Lemma \ref{lemma_extreme_points_h_descr} although, in general, not every $E_b(\sigma)$ needs to be in $\{x\in\mathbb R^n : Mx\leq b\}$ for arbitrary $b\in\mathbb R^{2^n}$ with $b'_n +b'_{n+1}=0$ (cf.~Example \ref{ex_proof_ext_point_fail}, \ref{app_b}). However for the well-structured polytopes we will deal with later on all of these $E_b(\sigma)$ lie within the polytope, in which case they are the only extreme points:

\begin{thm}\label{thm_Ebsigma_extreme}
Let $b\in\mathbb R^{2^n}$ with $b'_n +b'_{n+1}=0$ be given such that $\{E_b(\sigma) : \sigma\in S_n\}\subset \{x\in\mathbb R^n : Mx\leq b\}$. Then every extreme point of $\{x\in\mathbb R^n : Mx\leq b\}$ is of the form $E_b(\sigma)$ for some $\sigma\in S_n$, and therefore $\{x\in\mathbb R^n : Mx\leq b\}=\operatorname{conv}\{E_b(\sigma) : \sigma\in S_n\}$.
\end{thm}
\begin{proof}
Let $p\in \{x\in\mathbb R^n : Mx\leq b\}$ be extremal so by Lemma \ref{lemma_extreme_points_h_descr} there exists a submatrix $M'\in\{0,1\}^{n\times n}$ of $M$ of full rank, one row of $M'$ being equal to $\unitvector^{\top}$, such that $M'p=\mathfrak b(M')$. By Minkowski's theorem \cite[Thm.~5.10]{Brondsted83} if we can show that $p=E_b(\sigma)$ for some $\sigma\in S_n$ this would conclude the proof.

Indeed consider any two rows $m_1,m_2$ of $M'$. The idea will be to show that
$m_\mathrm{min}p=\mathfrak b(m_\mathrm{min})$ and $m_\mathrm{max}p=\mathfrak b(m_\mathrm{max})$ where $m_\mathrm{min}:=\min\{m_1,m_2\}$, $m_\mathrm{max}:=\max\{m_1,m_2\}$. Thus $M'$ can be extended by these rows while keeping the equality $M'p=\mathfrak b(M')$. This would conclude the proof by means of an abstract result which, roughly speaking, states that any matrix of full rank which has $\unitvector^{\top}$ as a row and satisfies this min-max-property necessarily features a submatrix of the form $\Lambda\,\underline{\sigma}$ for some $\sigma\in S_n$ (Lemma \ref{lemma_minmax_matrix} (iii), \ref{app_proofs}), so $p=E_b(\sigma)$.

Now note that 
$m_\mathrm{min}\leq m_\mathrm{max}$, meaning one finds $\tau\in S_n$ such that $m_\mathrm{min}$, $ m_\mathrm{max}$ are rows of
$
\Lambda\,\underline{\tau}
$ \footnote{
If $m_\mathrm{min}=0$ then one can trivially find $\tau\in S_n$ such that $ m_\mathrm{max}$ is a row of $\Lambda\,\underline{\tau}$.
}, as well as $m_\mathrm{min}+m_\mathrm{max}=m_1+m_2$.
This has two immediate consequences: firstly, $M'p=\mathfrak b(M')$ and $Mp\leq b$ imply
\begin{align*}
\mathfrak b(m_1)+\mathfrak b(m_2)=(m_1+m_2)p=(m_\mathrm{min}+ m_\mathrm{max})p\leq \mathfrak b( m_\mathrm{min}) +\mathfrak b(m_\mathrm{max})\,,
\end{align*}
and secondly, $\Lambda\,\underline{\tau} E_b(\tau)=b'_\tau$ and $ME_b(\tau)\leq b$ (because $E_b(\tau)\in \{x\in\mathbb R^n : Mx\leq b\}$ by assumption) yield
\begin{align*}
\mathfrak b( m_\mathrm{min}) +\mathfrak b(m_\mathrm{max})=(m_\mathrm{min}+ m_\mathrm{max})E_b(\tau)=(m_1+m_2)E_b(\tau)\leq \mathfrak b(m_1)+\mathfrak b(m_2)\,.
\end{align*}
Combining these two we get
$$
\mathfrak b(m_1)+\mathfrak b(m_2)=(m_\mathrm{min}+ m_\mathrm{max})p\leq \mathfrak b( m_\mathrm{min}) +\mathfrak b(m_\mathrm{max})\leq \mathfrak b(m_1)+\mathfrak b(m_2)\,,
$$
that is, $(m_\mathrm{min}+ m_\mathrm{max})p= \mathfrak b( m_\mathrm{min}) +\mathfrak b(m_\mathrm{max})$. But $m_\mathrm{min}p\leq \mathfrak b( m_\mathrm{min})$, $ m_\mathrm{max}p\leq \mathfrak b(m_\mathrm{max})$ (due to $Mp\leq b$), hence
$
m_\mathrm{min}p= \mathfrak b( m_\mathrm{min})$ and $m_\mathrm{max}p= \mathfrak b(m_\mathrm{max})
$.
As stated before, this means we can extend $M'$ by $m_\mathrm{min},m_\mathrm{max}$---assuming these were not part of $M'$ in the first place---to a matrix $M''$ which still satisfies $M''p=\mathfrak b(M'')$.
Repeating this enlargement process over and over will terminate eventually as $\{0,1\}^n$ is finite, yielding $\tilde M$
of full rank, containing $\unitvector^{\top}$, and, most importantly, for any two rows $m_1$, $m_2$ of $\tilde M$, $\min\{m_1,m_2\}$ and $\max\{m_1,m_2\}$ are rows of $\tilde M$ as well. Therefore Lemma \ref{lemma_minmax_matrix} (iii) (\ref{app_proofs}) yields a permutation $\sigma\in S_n$ such that every row of $ \Lambda\,\underline{\sigma} $ is a row of $\tilde M$, so $\tilde Mp=\mathfrak b(\tilde M)$ implies $\Lambda\,\underline{\sigma}p=\mathfrak b(\Lambda\,\underline{\sigma})$, that is, $p=E_b(\sigma)$.
\end{proof}
\begin{rmk}
This result is remarkable because it shows that under certain requirements on $b$ the special structure of $M$ allows to simplify the procedure of finding the extreme points of the induced polytope significantly. Usually one would have to determine all invertible submatrices of $M$, solve the corresponding linear equations
(cf.~\cite[Thm.~8.4 ff.]{Schrijver86}),
and finally check whether these solutions satisfy the remaining inequalities. However, $M$ has way more invertible submatrices than actual extreme points (i.e.~$n!$) in this case.
\end{rmk}

Following Remark \ref{rem_extend_lemma_minmax} (\ref{app_proofs}) one can even improve Thm.~\ref{thm_Ebsigma_extreme}: If the matrix $M'$ corresponding to $p$ contains two incomparable rows $m_1,m_2$, that is, $m_1\not\geq m_2\not\geq m_1$, then there exist at least two permutations $\sigma_1,\sigma_2\in S_n$ such that $E_b(\sigma_1)=p=E_b(\sigma_2)$. Notably in such a situation the map $\sigma\mapsto E_b(\sigma)$ is not injective.

\section{The $d$-Majorization Polytope}\label{sec_d_maj_poly}
\subsection{Characterizing the $d$-Majorization Polytope}\label{subsec_d_maj_poly}
With the tools surrounding convex polytopes developed in Section \ref{sec_prelim_vector} we are finally ready to explore the ``geometry'' of $d$-majorization. For this let us consider the set of all vectors which are $d$-majorized by some $y\in\mathbb R^n$; more generally we introduce the map
\begin{align*}
M_d:\mathcal P(\mathbb R^n)&\to\mathcal P(\mathbb R^n)\\
S&\mapsto \bigcup\nolimits_{y\in S}\lbrace x\in\mathbb R^n : x\prec_d y\rbrace
\end{align*}
where $\mathcal P$ denotes the power set. 
For convenience $M_d(y):=M_d(\{y\})$ for any $y\in\mathbb R^n$, which then equals the set of all vectors which are $d$-majorized by $y$. Note that the idea here is close to---but should not be confused with---the ($d$-)majorization polytope of two vectors \cite{Dahl99b} which, given two real vectors, is the set of all ($d$-)stochastic matrices which map one vector to the other.
\begin{lem}\label{lemma_closure_op}
Let $d\in\mathbb R_{++}^n$. Then $M_d$ is a closure operator\footnote{
Recall that an operator $J$ on the power set $\mathcal P(S)$ of a set $S$ is called \textit{closure operator} or \textit{hull operator} if it is extensive ($X\subseteq J(X)$), increasing ($X\subseteq Y\,\Rightarrow\,J(X)\subseteq J(Y)$) and idempotent ($J(J(X))=J(X)$) for all $X,Y\in\mathcal P(S)$, cf., e.g., \cite[p.~42]{Cohn81}.
}. In particular, for any $x,y\in\mathbb R^n$ one has $x\prec_dy$ if and only if $M_d(x)\subseteq M_d(y)$.
\end{lem}
\begin{proof}
The first statement is a simple consequence of the $d$-stochastic matrices $s_d(n)$ forming a semigroup with identity. For the second statement note that $x\prec_dy$, that is, $x\in M_d(y)$ implies $M_d(x)\subseteq M_d(M_d(y))=M_d(y)$.
\end{proof}
Now Prop.~\ref{lemma_char_d_vec} (vi) directly implies
\begin{align}
M_d(y)&=\bigcap\nolimits_{i=1}^n\Big\lbrace x\in\mathbb R^n : \unitvector^{\top}x=\unitvector^{\top}y\ \wedge\ \Big\|x-\frac{y_i}{d_i}d\Big\|_1\leq\Big\|y-\frac{y_i}{d_i}d\Big\|_1\Big\rbrace\notag\\
&=\bigcap\nolimits_{i=1}^n\Big\lbrace x\in\mathbb R^n : \unitvector^{\top}\Big(x-\frac{y_i}{d_i}d\Big)=\unitvector^{\top}\Big(y-\frac{y_i}{d_i}d\Big)\ \wedge\ \Big\|x-\frac{y_i}{d_i}d\Big\|_1\leq\Big\|y-\frac{y_i}{d_i}d\Big\|_1\Big\rbrace\notag\\
&=\bigcap\nolimits_{i=1}^n\Big(\Big\lbrace \tilde x\in\mathbb R^n : \unitvector^{\top}\tilde x=\unitvector^{\top}\Big(y-\frac{y_i}{d_i}d\Big)\ \wedge\ \|\tilde x\|_1\leq\Big\|y-\frac{y_i}{d_i}d\Big\|_1\Big\rbrace+\frac{y_i}{d_i}d\Big)\label{eq:decomp_m_d}
\end{align}
for all $y\in\mathbb R^n$, $d\in\mathbb R_{++}^n$. 

\begin{lem}\label{lemma_trace_norm_ball_maj}
Let $z\in\mathbb R^n$
. Then
\begin{align*}
\{x\in\mathbb R^n : \unitvector^{\top}x=\unitvector^{\top}z\ \wedge\ \|x\|_1\leq\|z\|_1\rbrace=\big\lbrace x\in\mathbb R^n\,\big|\,x\prec{\footnotesize(\unitvector^{\top}z_+,-\unitvector^{\top}z_-,0,\ldots,0 )^\top }\big\}
\end{align*}
where $z=z_+-z_-$ is the unique decomposition of $z$ into positive and negative part, i.e.~$z_+=(\max\{z_j,0\})_{j=1}^n,z_-=(-z)_+=(\max\{-z_j,0\})_{j=1}^n)\in\mathbb R_+^n$.
\end{lem}
\begin{proof}
For what follows let $ \hat z:=(\unitvector^{\top}z_+,-\unitvector^{\top}z_-,0,\ldots,0)$.

`` $\supseteq$ '': Majorization by definition forces $\unitvector^{\top}x=\unitvector^{\top}z_+-\unitvector^{\top}z_-=\unitvector^{\top}(z_+-z_-)=\unitvector^{\top}z$. Also if $x\prec \hat z$ then there exists a doubly stochastic matrix $A$ which maps $\hat z$ to $x$ so using \eqref{eq:doubly_stoch_trace_norm} we compute
$$
\|x\|_1=\|A\hat z\|_1\leq \|\hat z\|_1= \unitvector^{\top}z_++\unitvector^{\top}z_-=\sum\nolimits_{j=1}^n |z_j|=\|z\|_1\,.
$$

`` $\subseteq$ '': Decompose $x=x_+-x_-$ with $x_+,x_-\in\mathbb R_+^n$ as above. By assumption
\begin{align*}
\unitvector^{\top}x=\unitvector^{\top}x_+-\unitvector^{\top}x_-&=\unitvector^{\top}z_+-\unitvector^{\top}z_-=\unitvector^{\top}z\\
\|x\|_1=\unitvector^{\top}x_++\unitvector^{\top}x_-&\leq \unitvector^{\top}z_++\unitvector^{\top}z_-=\|z\|_1
\end{align*}
so taking the sum of these conditions gives $\unitvector^{\top}x_+\leq \unitvector^{\top}z_+$. Thus for all $k=1,\ldots,n-1$
$$
\sum\nolimits_{i=1}^k x_{[i]} \leq \sum\nolimits_{i=1}^k (x_{[i]})_+ \leq \unitvector^{\top}x_+\leq \unitvector^{\top}z_+= \unitvector^{\top}z_++\underbrace{0+\ldots+0}_{k-1\text{ zeros}} =\sum\nolimits_{i=1}^k \hat z_{[i]} 
$$
which---together with $\unitvector^{\top}x=\unitvector^{\top}z$---shows $x\prec\hat z$. 
\end{proof}
The previous lemma is the key to transferring the $\mathscr H$-description of classical majorization over to $d$-majorization:
\begin{thm}\label{thm_maj_halfspace}
Let $y\in\mathbb R^n$, $d\in\mathbb R_{++}^n$. Then
$
M_d(y)=\{x\in\mathbb R^n : Mx\leq b\}
$
with $M$ being the matrix \eqref{eq:M_maj} and
\begin{equation}\label{eq:d_majorization_b}
b= \min_{i=1,\ldots,n} {\footnotesize\begin{pmatrix} 
\unitvector^{\top}(y-\frac{y_i}{d_i}d)_+\unitvector+\frac{y_i}{d_i}M_1d\\
\vdots\\
 \unitvector^{\top}(y-\frac{y_i}{d_i}d)_+\unitvector+\frac{y_i}{d_i}M_{n-1}d\\
\unitvector^{\top}y\\
-\unitvector^{\top}y
\end{pmatrix}} \in\mathbb R^{2^n}
\,.
\end{equation}
\end{thm}
\begin{proof}
Using \cite[Thm.~1]{Dahl10} as well as \eqref{eq:decomp_m_d}, Lemma \ref{lemma_trace_norm_ball_maj}, and Remark \ref{rem:halfspace_actions} we find
\begin{align*}
M_d(y)&=\bigcap\nolimits_{i=1}^n\Big(\Big\lbrace \tilde x\in\mathbb R^n : \unitvector^{\top}\tilde x=\unitvector^{\top}\Big(y-\frac{y_i}{d_i}d\Big)\ \wedge\ \|\tilde x\|_1\leq\Big\|y-\frac{y_i}{d_i}d\Big\|_1\Big\rbrace+\frac{y_i}{d_i}d\Big)\\
&=\bigcap\nolimits_{i=1}^n\Big(\Big\lbrace x\in\mathbb R^n : x\prec{\footnotesize\begin{pmatrix} \unitvector^{\top}(y-\frac{y_i}{d_i}d)_+\\-\unitvector^{\top}(y-\frac{y_i}{d_i}d)_-\\0\\\vdots\\0 \end{pmatrix}}\Big\}+\frac{y_i}{d_i}d\Big)\\
&=\bigcap\nolimits_{i=1}^n\Big\lbrace x\in\mathbb R^n : Mx\leq {\footnotesize\begin{pmatrix} \unitvector^{\top}(y-\frac{y_i}{d_i}d)_+\\\vdots\\ \unitvector^{\top}(y-\frac{y_i}{d_i}d)_+\\\unitvector^{\top}(y-\frac{y_i}{d_i}d)\\-\unitvector^{\top}(y-\frac{y_i}{d_i}d)\end{pmatrix}}+\frac{y_i}{d_i}Md \Big\}\\
&=\Big\lbrace x\in\mathbb R^n : Mx\leq \min_{i=1,\ldots,n}{\footnotesize\begin{pmatrix} 
\unitvector^{\top}(y-\frac{y_i}{d_i}d)_+\unitvector+\frac{y_i}{d_i}M_1d\\
\vdots\\
 \unitvector^{\top}(y-\frac{y_i}{d_i}d)_+\unitvector+\frac{y_i}{d_i}M_{n-1}d\\
\unitvector^{\top}y\\
-\unitvector^{\top}y
\end{pmatrix}} \Big\}\,.\tag*{\qedhere}
\end{align*}
\end{proof}
%
%
%
\noindent Setting $d=\unitvector$ in Thm.~\ref{thm_maj_halfspace}---together with Lemma \ref{lemma_maj_sum_recovery} (iii) (\ref{app_proofs_B})---recovers the known $\mathscr H$-description of classical majorization \cite[Thm.~1]{Dahl10} as expected.\smallskip
%
%
%

The previous theorem shows that, roughly speaking, $\prec_d$ and $\prec$ share the same geometry, i.e.~the faces of $M_d(y)$ for arbitrary $y\in\mathbb R^n$, $d\in\mathbb R_{++}^n$ are all parallel to some face of a classical majorization polytope, but the precise location of the halfspaces (respectively faces) may differ. 

\begin{rmk}
The description of $d$-majorization via halfspaces is not only conceptionally interesting, it also enables an algorithmic computation of the extreme points of $M_d(y)$. In general, the problem of converting an $\mathscr H$-description to a $\mathscr V$-description is known as vertex enumeration problem and well-studied in the field of convex polytopes and computational geometry, see \cite{Avis97} for an overview. For arbitrary polytopes this is a hard problem but in our case---due to the particular structure of $M_d(y)$---one can achieve an explicit (even analytic) solution, cf.~Section \ref{sec_d_poly_analysis}.
\end{rmk}

\subsection{Geometric and Topological Properties of the $d$-Majorization Polytope}\label{subsec_43}

If $M_d$ acts on a set consisting of more than one vector we can state further geometric and topological results. This will be of use when treating continuity questions of the map $(d,P)\mapsto M_d(P)$ afterwards.
\begin{thm}\label{coro_0_1}
Let $d\in\mathbb R_{++}^n$ and an arbitrary subset $P\subseteq \mathbb R^n$ be given. Then the following statements hold.
\begin{itemize}
\item[(i)] If $P$ lies within a trace hyperplane, i.e.~there exists $ c\in\mathbb R$ such that $\unitvector^{\top}x= c$ for all $x\in P$, then $M_d(P)$ is star-shaped with respect to $\frac{ c}{\unitvector^{\top}d}d$.
\item[(ii)] If $P$ is convex, then $M_d(P)$ is path-connected.
\item[(iii)] If $P$ is compact, then $ M_d(P)$ is compact.
\end{itemize}
\end{thm}
\begin{proof}
(i): Every $x\in P$ is directly connected to $\frac{\unitvector^{\top}x}{\unitvector^{\top}d}d=\frac{c}{\unitvector^{\top}d}d$ within $M_d(P)$ (Thm.~\ref{prop_1} \& convexity of $\prec_d$). (ii): Let $y,z\in P$, $\lambda\in[0,1]$ be arbitrary. Then $\lambda y+(1-\lambda)z\in P$ and
$$
\lambda \frac{\unitvector^{\top}y}{\unitvector^{\top}d}d+(1-\lambda)\frac{\unitvector^{\top}z}{\unitvector^{\top}d}d=\frac{\unitvector^{\top}(\lambda y+(1-\lambda)z)}{\unitvector^{\top}d}d\in M_d( \lambda y+(1-\lambda)z)\subseteq M_d(P)\,.
$$
Thus $\frac{\unitvector^{\top}y}{\unitvector^{\top}d}d$ and $\frac{\unitvector^{\top}z}{\unitvector^{\top}d}d$ are
path-connected in $M_d(P)$, hence (ii) follows from (i).

(iii): As matrix multiplication is continuous, $M_d(P)=\{Az:A\in s_d(n),z\in P\}$ is compact as the image of the compact set $s_d(n)\times P$ under a continuous function.
\end{proof}
\noindent The previous theorem still holds when extending $\prec_d$ to complex 
vectors. 
Also one might hope that Thm.~\ref{coro_0_1} (ii) is not optimal in the sense that 
convexity of general $P$ implies convexity of $M_d(P)$. Example \ref{example_1} 
(\ref{app_b}), however, gives a negative answer.\smallskip

Now the description of $M_d(y)$ as a convex polytope is powerful enough to answer continuity questions regarding the map $M_d$.
\begin{thm}\label{thm_cont_Md}
Let $\mathcal P_c(\mathbb R^n)$ denote the collection of all compact subsets of $\mathbb R^n$ and let $\delta$ be the Hausdorff metric\footnote{
Given a metric space $(X,d)$ and $A,B\subseteq X$ non-empty and compact, the Hausdorff distance
$$
\delta(A,B):=\max\big\{\max_{z\in A}\min_{w\in B} d(z,w),\max_{z\in B}\min_{w\in A} d(z,w)\big\}
$$
is a metric on the space of all non-empty compact subsets of $X$, cf.~\cite[§21.VII]{Kuratowski66}.
\label{footnote_hausdorff}
}
on $\mathcal P_c(\mathbb R^n)$ with respect to $\|\cdot\|_1$. Then the following statements hold.
\begin{itemize}
\item[(i)] For all $d\in\mathbb R_{++}^n$, $M_d$ is non-expansive under $\delta$, that is, for all $P,P'\in\mathcal P_c(\mathbb R^n)$ one has $\delta(M_d(P),M_d(P'))\leq\delta(P,P')$.
\item[(ii)] The following map is continuous:
\begin{align*}
M:\mathbb R_{++}^n\times(\mathcal P_c(\mathbb R^n),\delta)&\to (\mathcal P_c(\mathbb R^n),\delta)\\
(d,P)&\mapsto M_d(P)
\end{align*}
\end{itemize}
\end{thm}
\begin{proof}
 Note that the image of a compact set under $M_d$ remains compact by Thm.~\ref{coro_0_1} (iii) so this
 guarantees that the image of the map $M$ is contained in $\mathcal P_c(\mathbb R^n)$ and that $\delta(M_d(P),M_d(P'))$
 is well-defined. (i): As a direct consequence of \eqref{eq:doubly_stoch_trace_norm} one has
\begin{align*}
\max_{z\in M_d(P)}\min_{w\in M_d(P')}\|z-w\|_1&=\max_{\substack{A\in s_d(n)\\z_1\in P}}\min_{\substack{B\in s_d(n)\\z_2\in P'}}\|Az_1-Bz_2\|_1
\leq \max_{\substack{A\in s_d(n)\\z_1\in P}}\min_{z_2\in P'}\|A(z_1-z_2)\|_1\\
&\leq \max_{\substack{A\in s_d(n)\\z_1\in P}}\min_{z_2\in P'}\|z_1-z_2\|_1=\max_{z\in P}\min_{w\in P'}\|z-w\|_1\,.
\end{align*}
Interchanging $P$ and $P'$ yields the desired estimate.\smallskip

(ii): Our proof can be divided into the following five steps.\smallskip

\noindent\textit{Step 1:} For all $y\in\mathbb R^n$ the vector $b$ from \eqref{eq:d_majorization_b} continuously depends on $d\in\mathbb R_{++}^n$
%
as a composition and a finite sum of continuous functions, using that the minimum over finitely many continuous functions remains continuous. Here we use that $d>0$ so $d\mapsto \frac1d$ is continuous.\smallskip

\noindent\textit{Step 2:} If a sequence $(b^{(m)})_{m\in\mathbb N}\subset\mathbb R^{2^n}$ with $b^{(m)}_{2^n-1}+b^{(m)}_{2^n}=0$ for all $m\in\mathbb N$ converges to $b\in\mathbb R^{2^n}$ in norm and all the induced convex polytopes $\{x\in\mathbb R^n : Mx\leq b^{(m)}\}$, $\{x\in\mathbb R^n : Mx\leq b\}$ are non-empty, then
$
\lim_{m\to\infty}\delta( \{x\in\mathbb R^n : Mx\leq b^{(m)}\},\{x\in\mathbb R^n : Mx\leq b\} )=0
$.


This follows directly from \cite[Thm.~2.4]{LiWu93}---or, originally, 
\cite{Hoffman52}---which yields a constant $c_M>0$ (only depending on $M$) such that
$$
\delta( \{x\in\mathbb R^n : Mx\leq b^{(m)}\},\{x\in\mathbb R^n : Mx\leq b\} )\leq c_M\|b^{(m)}-b\|_1\overset{m\to\infty}\to 0\,.
$$

\noindent \textit{Step 3:} $d\mapsto M_d(y)$ is continuous on $\mathbb R_{++}^n$ for all $y\in\mathbb R^n$.

Let $d^{(m)}\subset\mathbb R_{++}^n$ be a sequence with limit $d\in\mathbb R_{++}^n$. As shown in Step 1 this implies that $b^{(m)}=b(d^{(m)})\subset\mathbb R_{++}^{2^n}$ converges to $b(d)\in\mathbb R_{++}^{2^n}$ so Step 2 together with Thm.~\ref{thm_maj_halfspace} yields
 $$
\lim_{m\to\infty} M_{d^{(m)}}(y)=\lim_{m\to\infty} \{x\in\mathbb R^n : Mx\leq b^{(m)}\}=\{x\in\mathbb R^n : Mx\leq b\}=M_d(y)\,.
$$
\noindent \textit{Step 4:} $d\mapsto M_d(P)$ is continuous on $\mathbb R_{++}^n$ for all $P\in\mathcal P_c(\mathbb R^n)$.

As before let $d^{(m)}\subset\mathbb R_{++}^n$ be a sequence which converges to $d\in\mathbb R_{++}^n$ and let $\varepsilon>0$ be given. Because $P$ is compact one finds $y_1,\ldots,y_k\in P$, $k\in\mathbb N$ with $P\subseteq\bigcup_{i=1}^k B_{\varepsilon/2}(y_i)$. On the other hand (by Step 2) for every $i=1,\ldots,k$ one finds $N_i\in\mathbb N$ such that $ \delta(M_{d^{(m)}}(y_i),M_d(y_i))<\frac{\varepsilon}{2} $ for all $m\geq N_i$. We want to show $\delta(M_{d^{(m)}}(P),M_d(P))<\varepsilon$ for all $m\geq N:=\max\{N_1,\ldots,N_k\}$ which would imply the claim. 

Let $m\geq N$ and $x\in M_{d^{(m)}}(P)$ so one finds $A\in s_{d^{(m)}}(n)$ and 
$y\in P$ such that $x=Ay$. First compactness of $P$ yields $y_i \in P$ such that
$\|y-y_i\|<\frac{\varepsilon}{2}$. Then $Ay_i$ is in $ M_{d^{(m)}}(y_i)$ which lets us 
pick $\tilde x\in M_d(y_i) \subset M_d(P)$ with $\|Ay_i- \tilde x\|_1<\frac{\varepsilon}{2}$ (because 
$\delta(M_{d^{(m)}}(y_i),M_d(y_i))<\frac{\varepsilon}{2}$). Using 
\eqref{eq:doubly_stoch_trace_norm} we compute
\begin{align*}
\|x-\tilde x\|_1\leq \|Ay-Ay_i\|_1+\|Ay_i-\tilde x\|_1\leq \|y-y_i\|_1+\|Ay_i-\tilde x\|_1<\frac{\varepsilon}{2}+\frac{\varepsilon}{2}=\varepsilon\,.
\end{align*}
Analogously for every $\tilde x\in M_{d}(P)$ one finds $x\in M_{d^{(m)}}(P)$ such that $\|x-\tilde x\|_1<\varepsilon$ which by definition of $\delta$ implies $\delta(M_{d^{(m)}}(P),M_d(P))<\varepsilon$ for all $m\geq N$.\smallskip

\noindent \textit{Step 5:} $M$ is continuous (in the product topology).

Let $(d^{(m)},P_m)_{m\in\mathbb N}\subset \mathbb R_{++}^n\times \mathcal P_c(\mathbb R^n)$ converge to $(d,P)\in \mathbb R_{++}^n\times \mathcal P_c(\mathbb R^n)$ in the product topology, i.e.~$\lim_{m\to\infty}\|d^{(m)}-d\|_1=0$ and $\lim_{m\to\infty}\delta(P^{(m)},P)=0$. By Step 4 the former implies $\lim_{m\to\infty}\delta(M_{d^{(m)}}(P),M_d(P))=0$ so using
 that $M_{d^{(m)}}$ is non-expansive we find
\begin{align*}
\delta(M_{d^{(m)}}(P^{(m)}),M_d(P))&\leq \delta(M_{d^{(m)}}(P^{(m)}),M_{d^{(m)}}(P))+\delta(M_{d^{(m)}}(P),M_d(P))\\
&\leq \delta(P^{(m)},P)+\delta(M_{d^{(m)}}(P),M_d(P))\overset{m\to\infty}\to 0\,.\tag*{\qedhere}
\end{align*}
\end{proof}
\begin{rmk}
\begin{itemize}
\item[(i)]

Continuity of the map $M$ is supported by the fact that the half-spaces limiting $M_d(y)$ are independent of $d,y$. An example of a discontinuous relation between $A\in\mathbb R^{m\times n}$ and the induced polytope $\{x\in\mathbb R^n : Ax\leq b\}$ can be found in Example \ref{example_not_cont_A}.

\item[(ii)]
While $M_d$ is, in principle, defined for arbitrary $d\in\mathbb R^n$ the continuity statement from Thm.~\ref{theorem_max_corner_maj} (ii) fails if the domain is extended to $\mathbb R_+^n\times P_c(\mathbb R^n)$. A counterexample is given in Example \ref{ex_discont_IR_plus} (\ref{app_b}).
\end{itemize}
\end{rmk}

\subsection{Analyzing the $d$-Majorization Polytope}\label{sec_d_poly_analysis}

So far we learned that the majorization polytope $M_d(y)$ induced by a single vector $y\in\mathbb R^n$ with respect to some $d\in\mathbb R_{++}^n$ differs from the classical majorization polytope not in the orientation of the faces but only in their precise location. By Thm.~\ref{thm_maj_halfspace} this difference is fully captured by the following map:
\begin{equation}\label{eq:f_b'_vec}
\begin{split}
f:[0,\unitvector^{\top}d]&\to\mathbb R\\
c&\mapsto \min_{i=1,\ldots,n} \Big(\unitvector^{\top}\Big(y-\frac{y_i}{d_i}d\Big)_++\frac{y_i}{d_i}c\Big)
\end{split}
\end{equation}
Thus if we want to learn more about the $d$-majorization polytope we are well-advised to study the properties of \eqref{eq:f_b'_vec}. 
\begin{lem}\label{lemma_properties_of_f_min}
Let $y\in\mathbb R^n$, $d\in\mathbb R_{++}^n$ be given and let $\sigma\in S_n$ be any permutation which orders $(\frac{y_i}{d_i})_{i=1}^n$ decreasingly, i.e.~$\frac{y_{\sigma(1)}}{d_{\sigma(1)}}\geq\ldots\geq\frac{y_{\sigma(n)}}{d_{\sigma(n)}}$. Then the map \eqref{eq:f_b'_vec} has the following properties.
\begin{itemize}
\item[(i)] $f$ is continuous, piecewise linear, and concave.
\item[(ii)] For arbitrary $j=1,\ldots,n$ and $c\in(\sum_{i=1}^{j-1}d_{\sigma(i)},\sum_{i=1}^{j}d_{\sigma(i)})$
\begin{equation*}
f(c)=\sum\nolimits_{i=1}^{j-1} y_{\sigma(i)}+\frac{y_{\sigma(j)}}{d_{\sigma(j)}}\Big(c-\sum\nolimits_{i=1}^{j-1}d_{\sigma(i)}\Big)
\end{equation*}
as well as $f'(c)=\frac{y_{\sigma(j)}}{d_{\sigma(j)}}$ so the (weak) derivative of $f$ is monotonically decreasing.
\item[(iii)] For all $j=0,\ldots,n$ one has $f( \sum_{i=1}^j d_{\sigma(i)} )= \sum_{i=1}^j y_{\sigma(i)} $ so in particular $f(0)=0$ and $f(\unitvector^{\top}d)=\unitvector^{\top}y$.
\item[(iv)] Let $k=1,\ldots,n-1$, $\tau\in S_n$, and pairwise different $\alpha_1,\ldots,\alpha_k\in\{1,\ldots,n\}$ be given. Then
$$
\sum\nolimits_{j=1}^k f\Big(\sum\nolimits_{i=1}^{\alpha_j-1}d_{\tau(i)}\Big)+f\Big(\sum\nolimits_{i=1}^k d_{\tau(\alpha_i)}\Big)\geq\sum\nolimits_{j=1}^k f\Big(\sum\nolimits_{i=1}^{\alpha_j}d_{\tau(i)}\Big)\,.
$$
\item[(v)] For all $j=1,\ldots,n-1$
$$
\frac{f\big(\sum\nolimits_{i=1}^j d_{[i]}\big)-f\big(\sum\nolimits_{i=1}^{j-1} d_{[i]}\big)}{d_{[j]}}\geq \frac{f\big(\sum\nolimits_{i=1}^{j+1} d_{[i]}\big)-f\big(\sum\nolimits_{i=1}^j d_{[i]}\big)}{d_{[j+1]}}\,.
$$
\end{itemize}
\end{lem}
\begin{proof}
(i): The minimum over finitely many affine linear functions (in particular these functions are continuous \& concave) is piecewise linear, continuous, and concave. (ii): Direct consequence of Lemma \ref{lemma_maj_sum_recovery} (\ref{app_proofs_B}). (iii): Follows from (ii) together with continuity of $f$. (iv): Because $f$ is continuous \& concave ($-f$ is continuous \& convex) this is a direct consequence of Prop.~\ref{lemma_char_d_vec} (ii) (for $d=\unitvector$) together with Lemma \ref{lemma_d_ordering_maj} (\ref{app_proofs_B}) and $f(0)=0$ from (iii). (v): Define $\tilde d:=(d_{[j]},d_{[j+1]})^\top \in\mathbb R_{++}^2$. Evidently
$$
\Big(\sum\nolimits_{i=1}^j d_{[i]}\Big)\tilde d=\begin{pmatrix}
 d_{[j]}\sum\nolimits_{i=1}^j d_{[i]}\\
 d_{[j+1]}\sum\nolimits_{i=1}^j d_{[i]}
\end{pmatrix}\prec_{\tilde d} \begin{pmatrix} d_{[j]}\sum\nolimits_{i=1}^{j+1} d_{[i]} \\
d_{[j+1]}\sum\nolimits_{i=1}^{j-1} d_{[i]}
\end{pmatrix}
$$
due to minimality of $\tilde d$ w.r.t.~$\prec_{\tilde d}$ (Thm.~\ref{prop_1} (i)) and because the entries of the two vectors sum up to the same.
Again Prop.~\ref{lemma_char_d_vec} (ii) for $d\to\tilde d$ yields
\begin{align*}
d_{[j]} f\Big(\sum\nolimits_{i=1}^j d_{[i]}\Big)+d_{[j+1]} f\Big(\sum\nolimits_{i=1}^j d_{[i]}\Big)&=
d_{[j]} f\Big(\frac{ d_{[j]}\sum\nolimits_{i=1}^j d_{[i]}}{d_{[j]}}\Big)+d_{[j+1]} f\Big( \frac{d_{[j+1]}\sum\nolimits_{i=1}^j d_{[i]}}{d_{[j+1]}}\Big)\\
&\geq d_{[j]} f\Big(\frac{ d_{[j]}\sum\nolimits_{i=1}^{j+1} d_{[i]}}{d_{[j]}}\Big)+d_{[j+1]} f\Big( \frac{d_{[j+1]}\sum\nolimits_{i=1}^{j-1} d_{[i]}}{d_{[j+1]}}\Big)\\
&= d_{[j]} f\Big(\sum\nolimits_{i=1}^{j+1} d_{[i]}\Big)+d_{[j+1]} f\Big(\sum\nolimits_{i=1}^{j-1} d_{[i]}\Big)
\end{align*}
because $-f$ is convex, which readily implies (v).
\end{proof}

For all rows $m\in\{0,1\}^n$ of $M$, the $b$-vector of the $d$-majorization polytope satisfies $\mathfrak b(m)=f(md)$ so, in slight abuse of notation, $M_d(y)=\{x\in\mathbb R^n : Mx\leq f(Md)\}$. 
\begin{rmk}
Recall that in the physics literature, thermo-majorization is usually defined via curves of the following form: Given any vector $z\in\mathbb R^n$ and $d\in\mathbb R_{++}^n$ consider the piecewise linear, continuous curve fully characterized by the elbow points $\{ \big(\sum_{i=1}^j d_{\sigma(i)},\sum_{i=1}^j z_{\sigma(j)}\big) \}_{j=0}^n$, where $\sigma$ is any permutation such that $\frac{z_{\sigma(1)}}{d_{\sigma(1)}}\geq\ldots\geq\frac{z_{\sigma(n)}}{d_{\sigma(n)}}$. Then a vector $y$ is said to thermo-majorize $x$ if $\unitvector^{\top} x=\unitvector^{\top} y$ and if the curve induced by $y$ is never below the curve induced by $x$ \cite{Horodecki13}.
But by the previous lemma this thermo-majorization curve is precisely the function $f$ which characterizes the polytope meaning $x\prec_dy$ is equivalent to $f_x(c)\leq f_y(c)$ for all $c\in[0,\unitvector^{\top}d]$; more on this in a bit.
\end{rmk}
While this confirms the (well-known) equivalence of $d$-majorization and thermo-majorization, we can reduce the comparison of the two curves to just the ``elbow points'' of the lower curve---as already observed in \cite[Thm.~4]{Alhambra16}---by means of the following elegant proof:
\begin{proof}[Proof of Prop.~\ref{lemma_char_d_vec} (i) $\Leftrightarrow$ (vii)]
By Lemma \ref{lemma_closure_op}, $x\prec_dy$ is equivalent to $M_d(x)\subseteq M_d(y)$ which by Thm.~\ref{thm_maj_halfspace} and Remark \ref{rem:halfspace_actions} holds if and only if $\unitvector^{\top}x=\unitvector^{\top}y$ and $f_x((Md)_i)\leq f_y((Md)_i)$ for all $i=1,\ldots,2^n-2$. Now we may apply Lemma \ref{lemma_properties_of_f_min} and the ``elbow point principle'' (i.e.~only check the elbow points of the \textit{lower concave} curve, similar to the proof of Prop.~\ref{lemma_char_d_vec}) to arrive at the equivalent condition: $\unitvector^{\top}x=\unitvector^{\top}y$ and $\sum_{i=1}^j x_{\sigma(i)}=f_x(\sum_{i=1}^jd_{\sigma(i)})\leq f_y(\sum_{i=1}^jd_{\sigma(i)})$ for all $j=1,\ldots,n-1$, which concludes the proof.
\end{proof}

Another advantage of introducing and studying the function $f$ is that its properties transfer to $M_d(y)$ which suffices to fully characterize the extreme points of the $d$-majorization polytope, thus generalizing \cite{Rado52}:

\begin{thm}\label{thm_Eb_sigma}
Let $y\in\mathbb R^n$, $d\in\mathbb R_{++}^n$. Then the extreme points of $M_d(y)$ are precisely the $E_b(\sigma)$, $\sigma\in S_n$ from Definition \ref{def_Ebsigma}. In particular $M_d(y)=\operatorname{conv}\{E_b(\sigma) : \sigma\in S_n\}$.
\end{thm}
\begin{proof}
If we can show $\{E_b(\sigma) : \sigma\in S_n\}\subseteq M_d(y)$ then by Thm.~\ref{thm_Ebsigma_extreme} we are done. Let arbitrary $\sigma\in S_n$ be given. Showing $E_b(\sigma)\in M_d(y)$ by Thm.~\ref{thm_maj_halfspace} is equivalent to showing $ME_b(\sigma)\leq b$, i.e.
$$
\sum_{i=1}^j (E_b(\sigma))_{\sigma(\alpha_i)}=\Big(\sum_{i=1}^j e_{\sigma(\alpha_i)}\Big)^\top E_b(\sigma)\leq \mathfrak b\Big(\sum_{i=1}^j e_{\sigma(\alpha_i)}^\top \Big)=f\Big(\sum_{i=1}^j d_{\sigma(\alpha_i)}\Big)
$$
for all $j=1,\ldots,n-1$ and all pairwise different $\alpha_1,\ldots,\alpha_j\in\{1,\ldots,n\}$ where $f$ is the map from \eqref{eq:f_b'_vec}. Be aware that writing $\sigma(\alpha_i)$ instead of $\alpha_i$ in above inequality
yields an equivalent problem as the $\alpha_i$ can be chosen arbitrarily anyway; this will be advantageous because the expression $(E_b(\sigma))_{\sigma(\alpha_i)}$ is easier to handle than $(E_b(\sigma))_{\alpha_i}$. 
Indeed by Lemma \ref{lemma_E_b_sigma_properties}
\begin{align*}
(E_b(\sigma))_{\sigma(\alpha_j)}
=\mathfrak b\Big(\sum_{i=1}^{\alpha_j} e_{\sigma(i)}^\top \Big)-\mathfrak b\Big(\sum_{i=1}^{\alpha_j-1} e_{\sigma(i)}^\top \Big)
=f\Big(\sum_{i=1}^{\alpha_j} d_{\sigma(i)}\Big)-f\Big(\sum_{i=1}^{\alpha_j-1} d_{\sigma(i)}\Big)
\end{align*}
for all $j=1,\ldots,k$. Hence showing $E_b(\sigma)\in M_d(y)$ is equivalent to
\begin{align*}
\sum\nolimits_{j=1}^k \Big(f\Big(\sum\nolimits_{i=1}^{\alpha_j} d_{\sigma(i)}\Big)&-f\Big(\sum\nolimits_{i=1}^{\alpha_j-1} d_{\sigma(i)}\Big)\Big)\leq f\Big(\sum\nolimits_{i=1}^k d_{\sigma(\alpha_i)}\Big)
\end{align*}
which holds due to Lemma \ref{lemma_properties_of_f_min} (iv).
\end{proof}
%
%
%

We immediately obtain the following result.

\begin{corollary}\label{thm_convex_poly}
Let $y\in\mathbb R^n$, $d\in\mathbb R_{++}^n$. Then $M_d(y)$ is a non-empty convex polytope of dimension at most $n-1$ and, moreover, has at most $n!$ extreme points. 
\end{corollary}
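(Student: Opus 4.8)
The plan is to read off each claimed property of $M_d(y)$ from results already established in the excerpt, so this corollary is essentially a bookkeeping statement. First I would note that $M_d(y)$ is non-empty because $y\prec_d y$ (the identity matrix $\identity_n$ is $d$-stochastic), so $y\in M_d(y)$.

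Next, by Theorem \ref{thm_maj_halfspace} we have $M_d(y)=\{x\in\mathbb R^n\,|\,Mx\leq b\}$ with $M$ the matrix \eqref{eq:M_maj} and $b$ the vector \eqref{eq:d_majorization_b}, whose last two entries are $\unitvector^Ty$ and $-\unitvector^Ty$, hence $b_{2^n-1}+b_{2^n}=0$. Since $M_d(y)$ is non-empty, Lemma \ref{lemma_Mx_b_convex_polytope} applies directly and tells us that $M_d(y)$ is a convex polytope of at most $n-1$ dimensions.

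For the bound on the extreme points, I would invoke Theorem \ref{thm_Eb_sigma}, which states that the extreme points of $M_d(y)$ are precisely the vectors $E_b(\sigma)$ for $\sigma\in S_n$. Since $|S_n|=n!$, the set $\{E_b(\sigma)\,|\,\sigma\in S_n\}$ has at most $n!$ elements, so $M_d(y)$ has at most $n!$ extreme points. This is all the proof requires; there is no genuine obstacle here, as every ingredient has already been proven. The only minor care needed is to make sure the hypotheses of the cited lemmas are met — in particular the non-emptiness needed for Lemma \ref{lemma_Mx_b_convex_polytope} and the trace-condition $b_{2^n-1}+b_{2^n}=0$, both of which are immediate — and perhaps to remark that the $E_b(\sigma)$ need not be pairwise distinct (as already observed in the discussion following Theorem \ref{thm_Ebsigma_extreme}), which is exactly why the bound is ``at most'' $n!$ rather than ``exactly''.
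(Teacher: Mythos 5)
Your proposal is correct and follows essentially the same route as the paper: non-emptiness via $y\in M_d(y)$ (identity is $d$-stochastic), the polytope structure and dimension bound via Theorem \ref{thm_maj_halfspace} combined with Lemma \ref{lemma_Mx_b_convex_polytope}, and the $n!$ bound via Theorem \ref{thm_Eb_sigma} and $|S_n|=n!$. Your extra checks of the trace condition and the possible non-injectivity of $\sigma\mapsto E_b(\sigma)$ are fine but not needed beyond what the paper already records.
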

\begin{proof}
For non-emptiness note that $y\in M_d(y)$ because $\identity_n\in s_d(n)$. 
By Thm.~\ref{thm_maj_halfspace} there exists $b\in\mathbb R^{2^n}$ such that $M_d(y)=\{x\in\mathbb R^n : Mx\leq b\}$ so $M_d(y)$ is a convex polytope of at most $n-1$ dimensions (Lemma \ref{lemma_Mx_b_convex_polytope}). Finally the extreme points of $M_d(y)$ are given by $\{E_b(\sigma) : \sigma\in S_n\}$ (Thm.~\ref{thm_Eb_sigma}) which due to $|S_n|=n!$ concludes the proof.
\end{proof}
\begin{rmk}
\begin{itemize}
\item[(i)] These results (Thm.~\ref{thm_Eb_sigma} \& Coro.~\ref{thm_convex_poly}) recently appeared in the physics literature for the special case $y\geq 0$ \cite[Sec.~2.2]{Alhambra19} but with an entirely different proof strategy: Alhambra et al.~explicitly constructed a family $\{P^{(\pi,\alpha)}\}_{\alpha}$ of $d$-stochastic matrices called ``$\beta$-permutations'' with the property that $\{P^{(\pi,\alpha)}y\}_{\alpha}$ contains all extreme points of $M_d(y)$, which---in our language---necessarily have to be of the form $E_b(\sigma)$ \cite[Lemma 12]{Lostaglio18}.
\item[(ii)] 
Analyzing and structuring the situations when $M_d(y)$ has less than $n!$ corners, i.e.~when $d,y$ are chosen such that the map $E_b$ is not injective reveals further connections between $M_d(y)$ and the map $f$ that determines the entries of the vector $b$. We will treat this question in a forthcoming paper \cite{PolytopeDegen22}.
\end{itemize}
\end{rmk}

Now one of these extreme points has the property of classically majorizing every other element inside the $d$-majorization polytope. The result, which is of particular interest, e.g., to tackle reachability questions in quantum control theory \cite{MTNS2020_1_alt,vE_PhD_2020}, reads as follows:

\begin{thm}\label{theorem_max_corner_maj}
Let $y\in\mathbb R_+^n$, $d\in\mathbb R_{++}^n$. Then there exists $z\in M_d(y)$ such that $x\prec z$ for all $x\in M_d(y)$, i.e.~$M_d(y)\subseteq M_\unitvector(z)$, and this $z$ is unique up to permutation.

More precisely if $\sigma\in S_n$ orders $d$ decreasingly, that is, $d_{\sigma(1)}\geq\ldots\geq d_{\sigma(n)}$, then one has $M_d(y)\subseteq M_\unitvector(E_b(\sigma))$. Thus $z$ can be chosen to be the extreme point $E_b(\sigma)$, i.e.~the solution to
\begin{equation}\label{eq:implicit_solution_z}
\Lambda\,\underline{\sigma}z=
\min_{i=1,\ldots,n}{\footnotesize\begin{pmatrix}
\unitvector^{\top}(y-\frac{y_i}{d_i}d)_++\frac{y_i}{d_i}d_{\sigma(1)}\\
\vdots\\
\unitvector^{\top}(y-\frac{y_i}{d_i}d)_++\frac{y_i}{d_i}\sum_{j=1}^{n-1}d_{\sigma(j)}\\
\unitvector^{\top}y
\end{pmatrix}} \overset{\eqref{eq:f_b'_vec}}=\Big(f\big(\sum\nolimits_{i=1}^jd_{\sigma(i)}\big)\Big)_{j=1}^n \,.
\end{equation}
Moreover $M_d(z)\subseteq M_\unitvector(z)$, and $\frac{z}{d}$ and $d$ are similarly ordered, i.e.~$\frac{z_{\sigma(1)}}{d_{\sigma(1)}}\geq \ldots\geq \frac{z_{\sigma(n)}}{d_{\sigma(n)}}$.%
\end{thm}
\begin{proof}

Uniqueness of such $z$ (up to permutation) is the easiest to show: If there exist $z_1,z_2\in M_d(y)$ such that $M_d(y)\subseteq M_\unitvector(z_i)$ for $i=1,2$, then in particular $z_{2-i}\in M_d(y)\subseteq M_\unitvector(z_i)$. Hence $z_1\prec z_2\prec z_1$ and one finds a permutation $\tau\in S_n$ such that $z_1=\underline{\tau}z_2$.

For existence let $\sigma\in S_n$ be any permutation which orders $d$ decreasingly. From Thm.~\ref{thm_Eb_sigma} we know that $z:=E_b(\sigma)\in M_d(y)$.
By \eqref{eq:implicit_solution_z} 
for all $j=1,\ldots,n-1$ one finds
\begin{equation}\label{eq:entry_z_sigma}
z_{\sigma(j)}=f\Big(\sum\nolimits_{i=1}^{j}d_{\sigma(i)}\Big)-f\Big(\sum\nolimits_{i=1}^{j-1}d_{\sigma(i)}\Big)=f\Big(\sum\nolimits_{i=1}^{j}d_{[i]}\Big)-f\Big(\sum\nolimits_{i=1}^{j-1}d_{[i]}\Big)\,.
\end{equation}
Therefore $\frac{z_{\sigma(j)}}{d_{\sigma(j)}}\geq \frac{z_{\sigma(j+1)}}{d_{\sigma(j+1)}}$ is equivalent to 
\begin{align*}
\frac{f\big(\sum\nolimits_{i=1}^j d_{[i]}\big)-f\big(\sum\nolimits_{i=1}^{j-1} d_{[i]}\big)}{d_{[j]}}\geq \frac{f\big(\sum\nolimits_{i=1}^{j+1} d_{[i]}\big)-f\big(\sum\nolimits_{i=1}^j d_{[i]}\big)}{d_{[j+1]}}
\end{align*}
which holds due to Lemma \ref{lemma_properties_of_f_min} (v), meaning $\frac{z}{d}$ and $d$ are indeed similarly ordered, as claimed. More importantly because $z\in\mathbb R_+^n$ (stochastic matrices preserve non-negativity of $y$) one even has $z_{\sigma(j)}=z_{[j]}$ for all $j=1,\ldots,n$ because
$
z_{\sigma(j)}\geq \frac{d_{\sigma(j)}}{d_{\sigma(j+1)}}z_{\sigma(j+1)}=\frac{d_{[j]}}{d_{[j+1]}} z_{\sigma(j+1)}\geq z_{\sigma(j+1)}
$.

Now recall that $M_\unitvector(z)=\{x\in\mathbb R^n : Mx\leq b'_z\}$ \cite[Thm.~1]{Dahl10} where $b'_z$ is of the following form:
the first $\binom{n}{1}$ entries equal $z_{[1]}=z_{\sigma(1)}$, the next $\binom{n}{2}$ entries equal $z_{[1]}+z_{[2]}=z_{\sigma(1)}+z_{\sigma(2)}$ and so forth until $\binom{n}{n-1}$ entries equaling $\sum_{i=1}^{n-1} z_{[i]}=\sum_{i=1}^{n-1} z_{\sigma(i)}$. Writing $M_d(y)=\{x\in\mathbb R^n : Mx\leq b\}$ (Thm.~\ref{thm_maj_halfspace}), if we can show that $b\leq b'_z$ then we get $M_d(y)\subseteq M_\unitvector(z)$ (Remark \ref{rem:halfspace_actions}) as desired.

For all $k=1,\ldots,n-1$ and all $\tau\in S_n$ by Lemma \ref{lemma_minmax} (\ref{app_proofs_B})---which we may apply because $\frac{y_i}{d_i}\geq 0$ for all $i$---we compute
\begin{align*}
\mathfrak b\Big(\sum_{j=1}^k e_{\tau(j)}^\top \Big)&=\min_{i=1,\ldots,n}\unitvector^{\top}\big(y-\frac{y_i}{d_i}d\big)_++\frac{y_i}{d_i}\sum_{j=1}^k d_{\tau(j)}\\
&\leq\max_{\tau\in S_n}\min_{i=1,\ldots,n}\unitvector^{\top}\big(y-\frac{y_i}{d_i}d\big)_++\frac{y_i}{d_i}\sum_{j=1}^k d_{\tau(j)}\\
&=\min_{i=1,\ldots,n}\unitvector^{\top}\big(y-\frac{y_i}{d_i}d\big)_++\frac{y_i}{d_i}\Big(\max_{\tau\in S_n}\sum_{j=1}^k d_{\tau(j)}\Big)\\
&=\min_{i=1,\ldots,n}\unitvector^{\top}\big(y-\frac{y_i}{d_i}d\big)_++\frac{y_i}{d_i}\Big(\sum_{j=1}^kd_{[j]}\Big)\overset{\eqref{eq:entry_z_sigma}}=\sum_{i=1}^k z_{\sigma(i)}=\mathfrak b'_z\Big(\sum_{j=1}^k e_{\tau(j)}^\top \Big)
\end{align*}
so $b\leq b'_z$ as claimed. To conclude the proof note that by Lemma \ref{lemma_closure_op} 
%
$z\in M_d(y)$ implies $M_d(z)\subseteq (M_d\circ M_d)(y)=M_d(y)\subseteq M_\unitvector(z)$.
\end{proof}
\noindent Non-negativity of $y$ in Thm.~\ref{theorem_max_corner_maj} is actually necessary as Example \ref{ex_pos_y_necess} (\ref{app_b}) shows.\smallskip

Given our knowledge of this maximal point (w.r.t.~classical majorization) in the $d$-majorization polytope, one can now give a necessary condition for when the initial vector $y$ itself is this maximal element.

\begin{corollary}\label{coro_y_max_corner}
Let $y\in\mathbb R_+^n$, $d\in\mathbb R_{++}^n$. If $\frac{y}{d}$ and $d$ are similarly ordered, that is, there exists a permutation $\sigma\in S_n$ such that $d_{\sigma(1)}\geq \ldots\geq d_{\sigma(n)}$ and $\frac{y_{\sigma(1)}}{d_{\sigma(1)}}\geq \ldots\geq \frac{y_{\sigma(n)}}{d_{\sigma(n)}}$, then $M_d(y)\subseteq M_\unitvector(y)$.
\end{corollary}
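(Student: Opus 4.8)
The plan is to reduce the statement to Theorem \ref{theorem_max_corner_maj}. That theorem already produces, for \emph{any} permutation $\sigma\in S_n$ ordering $d$ decreasingly, an extreme point $z=E_b(\sigma)$ of $M_d(y)$ with the property $M_d(y)\subseteq M_\unitvector(z)$. So it suffices to show that, under the hypothesis that $\tfrac yd$ and $d$ are similarly ordered, one may choose $\sigma$ so that $z$ coincides with $y$ itself; then $M_d(y)\subseteq M_\unitvector(z)=M_\unitvector(y)$ is immediate.

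Concretely, I would take $\sigma$ to be \emph{the} permutation furnished by the hypothesis, so that simultaneously $d_{\sigma(1)}\geq\cdots\geq d_{\sigma(n)}$ and $\tfrac{y_{\sigma(1)}}{d_{\sigma(1)}}\geq\cdots\geq\tfrac{y_{\sigma(n)}}{d_{\sigma(n)}}$. Since $\sigma$ orders $d$ decreasingly, Theorem \ref{theorem_max_corner_maj} applies with this $\sigma$ and gives $z:=E_b(\sigma)\in M_d(y)$ with $M_d(y)\subseteq M_\unitvector(z)$; moreover, reading off the entries of $z$ from \eqref{eq:implicit_solution_z} together with \eqref{eq:E_sigma_property} (recall that $\mathfrak b(m^T)=f(m^Td)$ for rows $m^T$ of $M$, where $f$ is the thermo-majorization curve \eqref{eq:f_b_vec} of $y$), one has
$$
z_{\sigma(j)}=f\Big(\sum\nolimits_{i=1}^{j}d_{\sigma(i)}\Big)-f\Big(\sum\nolimits_{i=1}^{j-1}d_{\sigma(i)}\Big)\qquad\text{for }j=1,\ldots,n.
$$

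The crucial observation is then that the \emph{same} $\sigma$ also orders $(\tfrac{y_i}{d_i})_{i=1}^n$ decreasingly, hence qualifies as the permutation appearing in Lemma \ref{lemma_properties_of_f_min}. Part (iii) of that lemma therefore yields $f\big(\sum_{i=1}^{j}d_{\sigma(i)}\big)=\sum_{i=1}^{j}y_{\sigma(i)}$ for all $j=0,\ldots,n$, and telescoping the displayed formula collapses it to $z_{\sigma(j)}=y_{\sigma(j)}$ for every $j$, i.e.~$z=y$. This finishes the argument.

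I do not expect a genuine obstacle here; the one point to watch is that the hypothesis is exactly what lets us feed a single permutation $\sigma$ simultaneously into Theorem \ref{theorem_max_corner_maj} (which wants $\sigma$ to sort $d$) and into Lemma \ref{lemma_properties_of_f_min} (which wants $\sigma$ to sort $\tfrac yd$). Ties among the entries of $d$ or of $\tfrac yd$ cause no trouble, since Lemma \ref{lemma_properties_of_f_min}(iii) holds for any valid decreasing rearrangement; and the non-negativity assumption $y\geq 0$ is needed only insofar as it is needed to invoke Theorem \ref{theorem_max_corner_maj}.
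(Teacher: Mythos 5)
Your argument is correct and is essentially the paper's own proof: invoke Theorem \ref{theorem_max_corner_maj} with the permutation $\sigma$ supplied by the hypothesis, then use Lemma \ref{lemma_properties_of_f_min} (iii) (applicable since the same $\sigma$ also sorts $\tfrac{y}{d}$ decreasingly) to identify $E_b(\sigma)=y$. The only cosmetic difference is that you recover $z=y$ entrywise by telescoping \eqref{eq:entry_z_sigma}, whereas the paper compares the vectors of partial sums in \eqref{eq:implicit_solution_z} directly.
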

\begin{proof}
If $d$ and $\frac{y}{d}$ are similarly ordered (by means of $\sigma$) then, by Thm.~\ref{theorem_max_corner_maj}, $M_d(y)\subseteq M_\unitvector(E_b(\sigma))$ with
$
\Lambda\,\underline{\sigma}E_b(\sigma)=
(f(\sum_{i=1}^jd_{\sigma(i)}))_{j=1}^n=
(\sum_{i=1}^jy_{\sigma(i)})_{j=1}^n=
\Lambda\,\underline{\sigma}y
$
by Lemma \ref{lemma_properties_of_f_min} (iii), hence $E_b(\sigma)=y$.
\end{proof}
Be aware that the converse to Coro.~\ref{coro_y_max_corner} does not hold, refer to Example \ref{ex_counter_y_max}, \ref{app_b}.
%
%
To see how the $d$-majorization polytope behaves (aside from continuity) when changing only $d$ while leaving the initial vector $y$ untouched we refer to Example \ref{ex_wandering_d_vector}. This example also illustrates Coro.~ \ref{coro_y_max_corner} because the whole trajectory $\{d(\lambda) : \lambda\in[0,1]\}$ taken by the $d$-vector satisfies $ \frac{y_1}{(d(\lambda))_1}\geq\ldots\geq \frac{y_n}{(d(\lambda))_n} $ so maximality of $y$ (w.r.t.~classical majorization) is preserved throughout.
%
%


\appendix

\renewcommand{\thesubsection}{\Alph{subsection}}

\section{Extreme Points of $s_d(3)$}\label{app_a}

\begin{lem}\label{lemma_extreme_points}
Let $d\in\mathbb R_{++}^3$ with $d_1>d_2>d_3$. 
\begin{itemize}
\item[(i)] If $d_1\geq d_2+d_3$, then the 10 extreme points of $s_d(3)$ are given by
\begin{align*}
\identity_3\quad\begin{pmatrix} 1&0&0\\0&1-\frac{d_3}{d_2}&1\\0&\frac{d_3}{d_2}&0 \end{pmatrix}\quad &\begin{pmatrix} 1-\frac{d_3}{d_1}&0&1\\0&1&0\\\frac{d_3}{d_1}&0&0 \end{pmatrix}\quad\begin{pmatrix} 1-\frac{d_2}{d_1}&1&0\\\frac{d_2-d_3}{d_1}&0&1\\\frac{d_3}{d_1}&0&0 \end{pmatrix}
\end{align*}${}$\vspace*{-16pt}
\begin{align*}
\begin{pmatrix} 1-\frac{d_3}{d_1}&\frac{d_3}{d_2}&0\\0&1-\frac{d_3}{d_2}&1\\\frac{d_3}{d_1}&0&0 \end{pmatrix}\quad&\begin{pmatrix} 1-\frac{d_2}{d_1}&1&0\\\frac{d_2}{d_1}&0&0\\0&0&1 \end{pmatrix}\quad\begin{pmatrix} 1-\frac{d_3}{d_1}&0&1\\\frac{d_3}{d_1}&1-\frac{d_3}{d_2}&0\\0&\frac{d_3}{d_2}&0 \end{pmatrix}
\end{align*}${}$\vspace*{-16pt}
\begin{align*}
\begin{pmatrix} 1-\frac{d_2-d_3}{d_1}&1-\frac{d_3}{d_2}&0\\\frac{d_2-d_3}{d_1}&0&1\\0&\frac{d_3}{d_2}&0 \end{pmatrix}\quad&\begin{pmatrix} 1-\frac{d_2}{d_1}&1-\frac{d_3}{d_2}&1\\\frac{d_2}{d_1}&0&0\\0&\frac{d_3}{d_2}&0 \end{pmatrix}\quad\begin{pmatrix} 1-\frac{d_2+d_3}{d_1}&1&1\\\frac{d_2}{d_1}&0&0\\\frac{d_3}{d_1}&0&0 \end{pmatrix}
\end{align*}
\item[(ii)] If $d_1< d_2+d_3$, then the 13 extreme points of $s_d(3)$ are given by
\begin{align*}
\identity_3\quad\begin{pmatrix} 1&0&0\\0&1-\frac{d_3}{d_2}&1\\0&\frac{d_3}{d_2}&0 \end{pmatrix}\quad &\begin{pmatrix} 1-\frac{d_3}{d_1}&0&1\\0&1&0\\\frac{d_3}{d_1}&0&0 \end{pmatrix}\quad\begin{pmatrix} 1-\frac{d_2}{d_1}&1&0\\\frac{d_2-d_3}{d_1}&0&1\\\frac{d_3}{d_1}&0&0 \end{pmatrix}
\end{align*}${}$\vspace*{-16pt}
\begin{align*}
\begin{pmatrix} 1-\frac{d_3}{d_1}&\frac{d_3}{d_2}&0\\0&1-\frac{d_3}{d_2}&1\\\frac{d_3}{d_1}&0&0 \end{pmatrix}\quad&\begin{pmatrix} 1-\frac{d_2}{d_1}&1&0\\\frac{d_2}{d_1}&0&0\\0&0&1 \end{pmatrix}\quad\begin{pmatrix} 1-\frac{d_3}{d_1}&0&1\\\frac{d_3}{d_1}&1-\frac{d_3}{d_2}&0\\0&\frac{d_3}{d_2}&0 \end{pmatrix}
\end{align*}${}$\vspace*{-16pt}
\begin{align*}
\begin{pmatrix} 1-\frac{d_2-d_3}{d_1}&1-\frac{d_3}{d_2}&0\\\frac{d_2-d_3}{d_1}&0&1\\0&\frac{d_3}{d_2}&0 \end{pmatrix}\quad&\begin{pmatrix} 1-\frac{d_2}{d_1}&1-\frac{d_3}{d_2}&1\\\frac{d_2}{d_1}&0&0\\0&\frac{d_3}{d_2}&0 \end{pmatrix}\quad\begin{pmatrix} 0&1&\frac{d_1-d_2}{d_3}\\\frac{d_2}{d_1}&0&0\\1-\frac{d_2}{d_1}&0&1-\frac{d_1-d_2}{d_3} \end{pmatrix}
\end{align*}${}$\vspace*{-16pt}
\begin{align*}
\begin{pmatrix} 0&\frac{d_1-d_3}{d_2}&1\\1-\frac{d_3}{d_1}&1-\frac{d_1-d_3}{d_2}&0\\\frac{d_3}{d_1}&0&0 \end{pmatrix}\quad&\begin{pmatrix} 0&\frac{d_1-d_3}{d_2}&1\\ \frac{d_2}{d_1} & 0 & 0 \\ 1-\frac{d_2}{d_1} &1-\frac{d_1-d_3}{d_2}&0 \end{pmatrix}\quad\begin{pmatrix} 0&1& \frac{d_1-d_2}{d_3} \\ 1-\frac{d_3}{d_1} &0& 1-\frac{d_1-d_2}{d_3}\\ \frac{d_3}{d_1} &0&0 \end{pmatrix}
\end{align*}
\end{itemize}
\end{lem}
\begin{proof}
The respective number of extreme points is stated in \cite[Remark 4.5]{Joe90} or, more recently, \cite[Ch.~IV]{Mazurek18}. Then one only has to verify that the above matrices (under the given assumptions) are in fact extremal in $s_d(3)$.
\end{proof}
Once we allow components of $d$ to coincide, the above extreme points simplify slightly (as already observed in \cite[Remark 4.5]{Joe90}). Within the setting of (i) if $d_2=d_3$ then one is left with 7 extreme points. For (ii) if either $d_1=d_2$ or $d_2=d_3$ then one has 10 extreme points and if $d_1=d_2=d_3$ then there are $6$ extreme points---namely the $3\times 3$ permutation matrices---which recovers Birkhoff's theorem, cf.~\cite[Thm.~2.A.2]{MarshallOlkin}. 

\section{Appendix to Section \ref{sec_prelim_vector}}\label{app_proofs}

In order to keep the main part of this paper sufficiently structured we outsourced some of the lengthy and technical lemmata.

\begin{lem}\label{lemma_minmax_matrix}
Let $A\in\{0,1\}^{m\times n}$ be a matrix such that $\operatorname{rank}(A)=n$,
$\unitvector^{\top}$ is a row of $A$, and for any two rows $a_1$, $a_2$ of $A$ their 
entrywise minimum $\min\{a_1,a_2\}$ and maximum $\max\{a_1,a_2\}$ are rows of 
$A$, as well. Then the following statements hold.
\begin{itemize}
\item[(i)] There exists a row $a$ of $A$ such that $a\unitvector=n-1$.
\item[(ii)] For every row $a$ of $A$ with $a\unitvector>1$ one finds a row $\tilde a$ of $A$ such that $\tilde a\unitvector=a\unitvector-1$ and $\tilde a\leq a$.
\item[(iii)] There exist rows $a_1,\ldots,a_{n-1}$ of $A$ and a permutation $\sigma\in S_n$ such that
\begin{equation}\label{eq:aj_rows_perm}
\Lambda\,\underline{\sigma}={\footnotesize\begin{pmatrix}
a_1\\\vdots\\a_{n-1}\\\unitvector^{\top}
\end{pmatrix}}\,.
\end{equation}
\end{itemize}
\end{lem}
\begin{proof}
(i): For all $j=1,\ldots,n$ define
$
S_j:=\{a : a\text{ is a row of }A\text{ and }ae_j=0\}\setminus\{0\}
$
as the collection of all (non-zero) rows of $A$ the $j$-th entry of which vanishes\footnote{
It may happen that $A$ contains (at most, due to rank condition) one column of ones so (at most) one of the $S_j$ might be empty, but one can still guarantee the existence of some $j$ such that $S_j\neq\emptyset$ (because $n\geq 2$, the case $n=1$ is trivial). 
}. Defining $a_j:=\max S_j$ this is a row of $A$ (due to the maximum property) with $a_j\unitvector\leq n-1$. It is obvious that any non-zero row $a$ of $A$ is in $S_j$ if and only if $a\leq a_j$---hence $a_j=a_k$ for any two $j,k$ implies $S_j=S_k$. Now there exists $k\in\{1,\ldots,n\}$ such that
\begin{equation}\label{eq:ak_aj_rel}
a_k\unitvector=\max_{j=1,\ldots,n}a_j\unitvector\quad(\geq 1)\,.
\end{equation}
If $a_k\unitvector=n-1$ then we are done. If $a_k\unitvector<n-1$ then one finds an index $i\neq k$ such that $a_ke_i=0$. Therefore $a_k\in S_i$ so $a_k\leq a_i$ but $a_k\unitvector\geq a_i\unitvector$ by \eqref{eq:ak_aj_rel}; this shows $a_k=a_i$ and thus $S_k=S_i$. We claim that this forces \textit{all} rows $a$ of $A$ to satisfy $a(e_i+e_k)\in\{0,2\}$; but then the linear span of all rows of $A$ has this property as well so it cannot contain $e_k$, contradicting $\operatorname{rank}(A)=n$. 

Indeed let $a$ be any row of $A$. If $a\in S_k=S_i$ then $ae_i=ae_k=0=ae_i+ae_k$. If $a\not\in S_k=S_i$ then $ae_i=ae_k=1$ so $ae_i+ae_k=2$.

(ii): We prove this via induction. The case $n=1$ is trivial. Now for $n\to n+1$ let $A\in\{0,1\}^{m\times(n+1)}$ with the above properties be given. Be aware of the following argument: For ${\tau}={\footnotesize\begin{pmatrix} 1&2&\cdots&j-1&j&j+1&\cdots&n+1\\2&3&\cdots&j&1&j+1&\cdots&n+1 \end{pmatrix}}$ and all $j=1,\ldots,n+1$ the matrix
\begin{align*}
A_j:=A\underline{\tau_j}{\footnotesize\begin{pmatrix} 0&\cdots&0\\1&&0\\&\ddots&\\0&&1 \end{pmatrix}}\in\mathbb R^{m\times n}
\end{align*}
is the original matrix $A$ but without the $j$-th column. Using this is easy to see that $\operatorname{rank}(A_j)=n$ (follows from, e.g., \cite[Thm.~0.4.5.(c)]{HJ1}), $\unitvector^{\top}\in A_j$, and the min-max condition for the rows of $A_j$ hold for all $j=1,\ldots,n+1$. Hence we may apply the induction hypothesis to any of these matrices $A_j$.

Now consider any row $a$ of $A$ with $a\unitvector>1$. There are two cases which, once verified, conclude the proof of (ii).
\begin{itemize}
\item[Case 1:] $a=\unitvector^{\top}$. By (i) we find $j\in\{1,\ldots,n+1\}$ such that $\unitvector^{\top}-e_j^\top $ is a row of $A$.
\item[Case 2:] $a\neq \unitvector^{\top}$ so there exists $j\in\{1,\ldots,n+1\}$ such that $ae_j=0$. Consider $A_j\in\mathbb R^{m\times n}$ and the truncated row $b$ of $A_j$ corresponding to $a$. By induction hypothesis ($b\unitvector=a\unitvector>1$) we find $\tilde b\in A_j$ such that $\tilde b\unitvector=b\unitvector-1$ and $\tilde b\leq b$. Now there exists a row $\hat a$ of $A$ which becomes $\tilde b$ when removing the $j$-th entry. Defining $\tilde a:=\min\{\hat a,a\}$ we know that this is a row of $A$ (min-max-property of $A$) and $\tilde a\leq a$ as well as $\tilde a\unitvector=\tilde b\unitvector =b\unitvector-1=a\unitvector-1$.
\end{itemize}

(iii): By assumption $\unitvector^{\top}\in A$ so using (ii) $A$ contains some $a_{n-1}$ of row sum $n-1$, which in turn yields $a_{n-2}\in A$ of row sum $n-2$ with $a_{n-2}\leq a_{n-1}$, and so forth. Eventually one ends up with rows $a_1,\ldots,a_{n-1}$ of $A$ which satisfy $a_j\unitvector=j$ for all $j=1,\ldots,n-1$ as well as $a_1\leq\ldots\leq a_{n-1}$; hence there exists a permutation ${\tau}\in S_n$ such that \eqref{eq:aj_rows_perm} holds.
\end{proof}
\begin{rmk}\label{rem_extend_lemma_minmax}
With an analogous argument one can show that every such matrix $A$ contains a standard basis vector as a row, and that for every row $a$ with $a\unitvector<n$ one finds $\tilde a\in A$ with $\tilde a\unitvector=a\unitvector+1$ and $\tilde a\geq a$. Therefore \textit{every} row of $A$ can be completed to a matrix of the form \eqref{eq:aj_rows_perm}.
\end{rmk}
\section{Appendix to Section \ref{sec_d_poly_analysis}}\label{app_proofs_B}

\begin{lem}\label{lemma_maj_sum_recovery}
Let $y\in\mathbb R^n$, $d\in\mathbb R_{++}^n$ be arbitrary and let $\sigma\in S_n$ satisfy\footnote{Such a permutation $\sigma$ always exists as it is just the decreasing ordering of the vector $\frac{y}{d}:=(\frac{y_i}{d_i})_{i=1}^n$.}
\begin{equation}\label{eq:sigma_ordering_yd}
\frac{y_{\sigma(1)}}{d_{\sigma(1)}}\geq \frac{y_{\sigma(2)}}{d_{\sigma(2)}}\geq\ldots\geq\frac{y_{\sigma(n)}}{d_{\sigma(n)}}\,.
\end{equation}
Then the following statements hold.
\begin{itemize}
\item[(i)] For all $c\in\mathbb R$, $k=1,\ldots,n$. 
$$
\unitvector^{\top}\Big( y-\frac{y_{\sigma(k)}}{d_{\sigma(k)}} d\Big)_++\frac{y_{\sigma(k)}}{d_{\sigma(k)}} c=\frac{y_{\sigma(1)}}{d_{\sigma(1)}} c-\sum_{i=1}^{k-1}\Big(\frac{y_{\sigma(i)}}{d_{\sigma(i)}}-\frac{y_{\sigma(i+1)}}{d_{\sigma(i+1)}}\Big)\Big( c-\sum_{j=1}^i d_{\sigma(j)} \Big)\,.
$$
\item[(ii)] Let $c\in(0,\unitvector^{\top}d]$ and let $k\in\{1,\ldots,n\}$ be the unique index such that $c-\sum_{i=1}^{k-1}d_{\sigma(i)}> 0$ but $c-\sum_{i=1}^k d_{\sigma(i)}\leq 0$. Then
$$
\min_{i=1,\ldots,n}\Big(\unitvector^{\top}\Big( y-\frac{y_{i}}{d_{i}} d\Big)_++\frac{y_{i}}{d_{i}} c\Big)=\Big(\sum\nolimits_{i=1}^{k-1}y_{\sigma(i)}\Big)+\frac{y_{\sigma(k)}}{d_{\sigma(k)}} \Big(c-\sum\nolimits_{i=1}^{k-1} d_{\sigma(i)}\Big) \,.
$$
\item[(iii)] If $d=\unitvector$, then $
\min_{i=1,\ldots,n} \unitvector^{\top}(y-y_i\unitvector)_++ky_i=\sum\nolimits_{i=1}^k y_{[i]}
$ for all $k=1,\ldots,n$.
\end{itemize}
\end{lem}
\begin{proof}
(i): This identity comes from
$$
\frac{y_{\sigma(1)}}{d_{\sigma(1)}} c-\sum_{i=1}^{k-1}\Big(\frac{y_{\sigma(i)}}{d_{\sigma(i)}}-\frac{y_{\sigma(i+1)}}{d_{\sigma(i+1)}}\Big)c=\frac{y_{\sigma(1)}}{d_{\sigma(1)}} c-\frac{y_{\sigma(1)}}{d_{\sigma(1)}} c+\frac{y_{\sigma(k)}}{d_{\sigma(k)}} c=\frac{y_{\sigma(k)}}{d_{\sigma(k)}} c
$$
as well as
\begin{align*}
\unitvector^{\top}\Big( y-\frac{y_{\sigma(k)}}{d_{\sigma(k)}} d\Big)_+&=\sum_{j=1}^{k-1}\Big(\frac{y_{\sigma(j)}}{d_{\sigma(j)}}-\frac{y_{\sigma(k)}}{d_{\sigma(k)}}\Big)d_{\sigma(j)}\\
&=\sum_{j=1}^{k-1}\sum_{i=j}^{k-1}\Big(\frac{y_{\sigma(i)}}{d_{\sigma(i)}}-\frac{y_{\sigma(i+1)}}{d_{\sigma(i+1)}}\Big)d_{\sigma(j)}=\sum_{i=1}^{k-1}\sum_{j=1}^{i}\Big(\frac{y_{\sigma(i)}}{d_{\sigma(i)}}-\frac{y_{\sigma(i+1)}}{d_{\sigma(i+1)}}\Big)d_{\sigma(j)}
\end{align*}
where in the last step one just re-enumerates the index set $\{(i,j) : 1\leq j\leq i\leq k-1\}$. 

(ii): Using (i)
\begin{align*}
\min_{i=1,\ldots,n}\unitvector^{\top}\Big( y-\frac{y_{i}}{d_{i}} d\Big)_++\frac{y_{i}}{d_{i}} c&=\min_{\ell=1,\ldots,n}\unitvector^{\top}\Big( y-\frac{y_{\sigma(\ell)}}{d_{\sigma(\ell)}} d\Big)_++\frac{y_{\sigma(\ell)}}{d_{\sigma(\ell)}} c\\
&=\frac{y_{\sigma(1)}}{d_{\sigma(1)}} c-\max_{\ell=1,\ldots,n}\sum_{i=1}^{\ell-1}\Big(\frac{y_{\sigma(i)}}{d_{\sigma(i)}}-\frac{y_{\sigma(i+1)}}{d_{\sigma(i+1)}}\Big)\Big( c-\sum_{j=1}^i d_{\sigma(j)} \Big)\,.
\end{align*}
There are two important things to notice here: the expression $\frac{y_{\sigma(i)}}{d_{\sigma(i)}}-\frac{y_{\sigma(i+1)}}{d_{\sigma(i+1)}}$ is always non-negative by \eqref{eq:sigma_ordering_yd} and, moreover, the map 
$
g:\{0,\ldots,n\}\to \mathbb R$, $i\mapsto c-\sum\nolimits_{j=1}^i d_{\sigma(j)}
$
satisfies $g(0)=c>0$, $g(n)=c-\unitvector^{\top}d\leq 0$, and is strictly monotonically decreasing. Thus the index $k$ described above exists, is unique, and we get
$$
\max_{\ell=1,\ldots,n}\sum_{i=1}^{\ell-1}\Big(\frac{y_{\sigma(i)}}{d_{\sigma(i)}}-\frac{y_{\sigma(i+1)}}{d_{\sigma(i+1)}}\Big)\Big( c-\sum_{j=1}^i d_{\sigma(j)} \Big)=\sum_{i=1}^{k-1}\Big(\frac{y_{\sigma(i)}}{d_{\sigma(i)}}-\frac{y_{\sigma(i+1)}}{d_{\sigma(i+1)}}\Big)\Big( c-\sum_{j=1}^i d_{\sigma(j)} \Big)
$$
as we simply disregard all negative summands ($i\geq k$). Therefore
\begin{align*}
\min_{i=1,\ldots,n}\unitvector^{\top}\Big( y-\frac{y_{i}}{d_{i}} d\Big)_++\frac{y_{i}}{d_{i}} c
&=\frac{y_{\sigma(1)}}{d_{\sigma(1)}} c-\sum_{i=1}^{k-1}\Big(\frac{y_{\sigma(i)}}{d_{\sigma(i)}}-\frac{y_{\sigma(i+1)}}{d_{\sigma(i+1)}}\Big)\Big( c-\sum_{j=1}^i d_{\sigma(j)} \Big)\\
&\overset{\text{(i)}}=\unitvector^{\top}\Big( y-\frac{y_{\sigma(k)}}{d_{\sigma(k)}} d\Big)_++\frac{y_{\sigma(k)}}{d_{\sigma(k)}} c\\
&= \sum\nolimits_{i=1}^{k-1}d_{\sigma(i)} \Big( \frac{y_{\sigma(i)}}{d_{\sigma(i)}}-\frac{y_{\sigma(k)}}{d_{\sigma(k)}} \Big)+\frac{y_{\sigma(k)}}{d_{\sigma(k)}} c \\
&= \Big(\sum\nolimits_{i=1}^{k-1}y_{\sigma(i)}\Big)+\frac{y_{\sigma(k)}}{d_{\sigma(k)}} \Big(c-\sum\nolimits_{i=1}^{k-1} d_{\sigma(i)}\Big)\,.
\end{align*}

(iii): Direct consequence of (ii).
\end{proof}
\begin{lem}\label{lemma_d_ordering_maj}
Let $d\in\mathbb R_{++}^n$, $k=1,\ldots,n-1$, $\tau\in S_n$, and pairwise different numbers $\alpha_1,\ldots,\alpha_k\in\{1,\ldots,n\}$ be given. Then
$$
v:=\begin{pmatrix} \sum_{i=1}^{\alpha_1-1}d_{\tau(i)}\\\vdots\\\sum_{i=1}^{\alpha_k-1}d_{\tau(i)}\\\sum_{i=1}^kd_{\tau(\alpha_i)} \end{pmatrix}\prec\begin{pmatrix} \sum_{i=1}^{\alpha_1}d_{\tau(i)}\\\vdots\\\sum_{i=1}^{\alpha_k}d_{\tau(i)}\\0 \end{pmatrix}=:w\in\mathbb R^{k+1}\,.
$$
\end{lem}
\begin{proof}
W.l.o.g.~$\alpha_1>\ldots>\alpha_k$---reordering the $\alpha_i$ amounts to reordering $v,w$ but classical majorization is permutation invariant. By definition $v\prec w$ holds iff $\unitvector^{\top}v=\unitvector^{\top}w$ and together with $\sum_{i=1}^\ell v_{[i]}\leq\sum_{i=1}^\ell w_{[i]}$ for all $\ell=1,\ldots,k$ (the former is readily verified). Because the $\alpha_i$ are ordered decreasingly one finds unique $\xi\in\{1,\ldots,k+1\}$ such that
$$
\sum\nolimits_{i=1}^{\alpha_\xi-1}d_{\tau(i)}< \sum\nolimits_{i=1}^kd_{\tau(\alpha_i)}\leq \sum\nolimits_{i=1}^{\alpha_{\xi-1}-1}d_{\tau(i)}
$$
(where $\alpha_0:=n+1$ and $\alpha_{k+1}:=0$). Thus $v\prec w$ is equivalent to
$$
[v]= \begin{pmatrix} \sum_{i=1}^{\alpha_1-1}d_{\tau(i)}\\\vdots\\ \sum_{i=1}^{\alpha_{\xi-1}-1}d_{\tau(i)} \\ \sum_{i=1}^kd_{\tau(\alpha_i)} \\ \sum_{i=1}^{\alpha_\xi-1}d_{\tau(i)}\\\vdots \\\sum_{i=1}^{\alpha_{k-1}-1}d_{\tau(i)} \\\sum_{i=1}^{\alpha_{k}-1}d_{\tau(i)} \end{pmatrix} 
\prec
\begin{pmatrix} \sum_{i=1}^{\alpha_1}d_{\tau(i)}\\\vdots\\ \sum_{i=1}^{\alpha_{\xi-1}}d_{\tau(i)}\\ \sum_{i=1}^{\alpha_\xi}d_{\tau(i)} \\ \sum_{i=1}^{\alpha_{\xi+1}}d_{\tau(i)}\\\vdots\\ \sum_{i=1}^{\alpha_k}d_{\tau(i)}\\0 \end{pmatrix} =[w]\,.
$$
The first $\xi-1$ partial sum conditions are evident.
Now consider $\ell\in\{\xi,\ldots,k\}$. Then
\begin{align*}
\sum_{j=1}^\ell w_{[j]}-\sum_{j=1}^\ell v_{[j]}&=\sum_{j=1}^\ell \Big(\sum_{i=1}^{\alpha_j}d_{\tau(i)}\Big)-\Big(\sum_{j=1}^{\ell-1}\Big(\sum_{i=1}^{\alpha_j-1}d_{\tau(i)}\Big)+\sum_{i=1}^kd_{\tau(\alpha_i)}\Big)\\
&= \sum_{i=1}^{\alpha_\ell}d_{\tau(i)}+\sum_{j=1}^{\ell-1}d_{\tau(\alpha_j)}-\sum_{i=1}^kd_{\tau(\alpha_i)}= \sum_{i=1}^{\alpha_\ell}d_{\tau(i)}-\sum_{i=\ell}^kd_{\tau(\alpha_i)}\geq0
\end{align*}
where in the last step we used that the entries of $d$ are non-negative and, more importantly, that $\{\alpha_{k},\alpha_{k-1},\ldots,\alpha_{\ell+1},\alpha_{\ell}\}\subseteq\{1,2,\ldots,\alpha_{\ell}-1,\alpha_{\ell}\}$ due to the ordering of the $\alpha_i$
\end{proof}

\begin{lem}\label{lemma_minmax}
Let $m,n\in\mathbb N$ and $x\in\mathbb R^n$, $y\in\mathbb R_+^n$, $z\in\mathbb R^m$ be given. Then
$$
\max_{k=1,\ldots,m}\min_{i=1,\ldots,n}(x_i+y_iz_k)=\min_{i=1,\ldots,n}\big(x_i+y_i\big(\max_{k=1,\ldots,m} z_k\big)\big)\,.
$$
\end{lem}
\begin{proof}
Direct computation:
\begin{align*}
\max_{k=1,\ldots,m}\min_{i=1,\ldots,n}(x_i+y_iz_k)&\leq\min_{i=1,\ldots,n}\max_{k=1,\ldots,m}(x_i+y_iz_k)\\
&=\min_{i=1,\ldots,n}\big(x_i+y_i\big(\max_{k=1,\ldots,m} z_k\big)\big)\leq \max_{k=1,\ldots,m}\min_{i=1,\ldots,n}(x_i+y_iz_k)
\end{align*}
The first step is the usual max-min inequality, the second step works because $y\geq 0$, and in the final step we use that
$
\min_{i=1,\ldots,n}\big(x_i+y_iz_l)\leq \max_{k=1,\ldots,m}\min_{i=1,\ldots,n}(x_i+y_iz_k)
$
for all $l=1,\ldots,m$---which in particular holds for the index $l$ which satisfies $z_l=\max_{k=1,\ldots,m} z_k$.
\end{proof}
\section{Examples and Counterexamples}\label{app_b}
\begin{example}\label{ex_proof_ext_point_fail}
Let $n=4$ so
$$
M={\footnotesize\begin{pmatrix} 1&0&0&0\\
0&1&0&0\\
0&0&1&0\\
0&0&0&1\\
1&1&0&0\\
1&0&1&0\\
1&0&0&1\\
0&1&1&0\\
0&1&0&1\\
0&0&1&1\\
1&1&1&0\\
1&1&0&1\\
1&0&1&1\\
0&1&1&1\\
1&1&1&1\\
-1&-1&-1&-1 \end{pmatrix}}
\qquad\text{ and choose }\qquad
b={\footnotesize\begin{pmatrix} 0\\
0\\
0\\
0\\
0\\
-1/2\\
-1/4\\
0\\
0\\
0\\
-1/2\\
-1/2\\
-5/8\\
0\\
-1\\
1 \end{pmatrix}}\,.
$$
By Definition \ref{def_Ebsigma} and Lemma \ref{lemma_E_b_sigma_properties}
\begin{align*}
\{E_b(\sigma) : \sigma\in S_4\}=\Big\{& {\footnotesize
\begin{pmatrix} 0 \\0 \\-1/2 \\-1/2 \end{pmatrix},
\begin{pmatrix} 0 \\ -3/8 \\ -1/2 \\ -1/8 \end{pmatrix},
\begin{pmatrix} 0 \\ -1/4 \\ -1/2 \\ -1/4 \end{pmatrix}, 
\begin{pmatrix} 0 \\ -3/8 \\ -3/8 \\ -1/4 \end{pmatrix}, 
\begin{pmatrix} -1/2 \\ 0 \\ 0 \\ -1/2 \end{pmatrix}, 
\begin{pmatrix} -1 \\ 0 \\ 0 \\ 0 \end{pmatrix}, }\\
&{\footnotesize
\begin{pmatrix} -1/2 \\ 0 \\ -1/2 \\ 0 \end{pmatrix}, 
\begin{pmatrix} -1/2 \\ -3/8 \\ 0 \\ -1/8 \end{pmatrix}, 
\begin{pmatrix} -5/8 \\ -3/8 \\ 0 \\ 0 \end{pmatrix}, 
\begin{pmatrix} -1/4 \\ -1/4 \\ -1/2 \\ 0 \end{pmatrix}, 
\begin{pmatrix} -1/4 \\ -3/8 \\ -3/8 \\ 0 \end{pmatrix}
}\Big\}\,.
\end{align*}
The second and the fourth vector from this list are the solutions to
\begin{equation*}
{\footnotesize\begin{pmatrix} 1&0&0&0\\1&0&1&0\\1&0&1&1\\1&1&1&1 \end{pmatrix}}p={\footnotesize\begin{pmatrix} 0\\-1/2\\-5/8\\-1 \end{pmatrix}}\quad\text{ and }\quad
{\footnotesize\begin{pmatrix} 1&0&0&0\\1&0&0&1\\1&0&1&1\\1&1&1&1 \end{pmatrix}}p={\footnotesize\begin{pmatrix}0\\-1/4\\-5/8\\-1 \end{pmatrix}}\,,
\end{equation*}
respectively and are not in $\{x\in\mathbb R^4 : Mx\leq b\}$---but every other point of $\{E_b(\sigma) : \sigma\in S_4\}$ is in. On the other hand one readily verifies that $p=-\frac{1}{8}(1,3,3,1)^\top $ satisfies $Mp\leq b$ and solves
\begin{equation*}
{\footnotesize\begin{pmatrix} 1&0&1&0\\1&0&0&1\\1&0&1&1\\1&1&1&1 \end{pmatrix}}p={\footnotesize\begin{pmatrix} -1/2\\-1/4\\-5/8\\-1 \end{pmatrix}}
\end{equation*}
so it is extreme in $\{x\in\mathbb R^4 : Mx\leq b\}$ by Lemma \ref{lemma_extreme_points_h_descr} but $p\not\in\{E_b(\sigma) : \sigma\in S_4\}$. Thus there exist extreme points of $\{x\in\mathbb R^4 : Mx\leq b\}$ not of the form $E_b(\sigma)$.
\end{example}

\begin{example}\label{example_1}
Let $n=3$ and $d=\unitvector$ (so $\prec_d$ becomes $\prec$). Consider the probability vectors $x=\frac15(2,1,2)^\top $, $y=\frac14(1,2,1)^\top $, and
their joining line segment $P:=\operatorname{conv}\lbrace x,y\rbrace$. 
Be aware that $P$ as well as $M_{\unitvector}(P)=\bigcup_{z\in P}\lbrace v\in\mathbb R^n : v\prec z\rbrace$ are subsets of $\Delta^2$
. 
One readily verifies
\begin{equation}\label{eq:maj_decomp}
M_{\unitvector}(P)=\lbrace v\in\mathbb R^n : v\prec x\,\vee\,v\prec y\rbrace=M_{\unitvector}(x)\cup M_{\unitvector}(y)\,,
\end{equation}
refer also to Fig.~\ref{Abb1}. Now although $x, \tilde y:=\frac14(1,1,2)^\top $ are in $M_{\unitvector}(P)$ one has
$$
\frac12 x+\frac12\tilde y=\frac1{40}\begin{pmatrix} 13\\9\\18 \end{pmatrix} =\begin{pmatrix} 0.325\\0.225\\0.45 \end{pmatrix} \overset{\eqref{eq:maj_decomp}}{\not\in} M_{\unitvector}(P)
$$
as it is neither majorized by $x$ nor by $y$; therefore $M_{\unitvector}(P)$ is not convex.
\end{example}
\begin{figure}[!htb]
\centering
\includegraphics[width=0.49\textwidth]{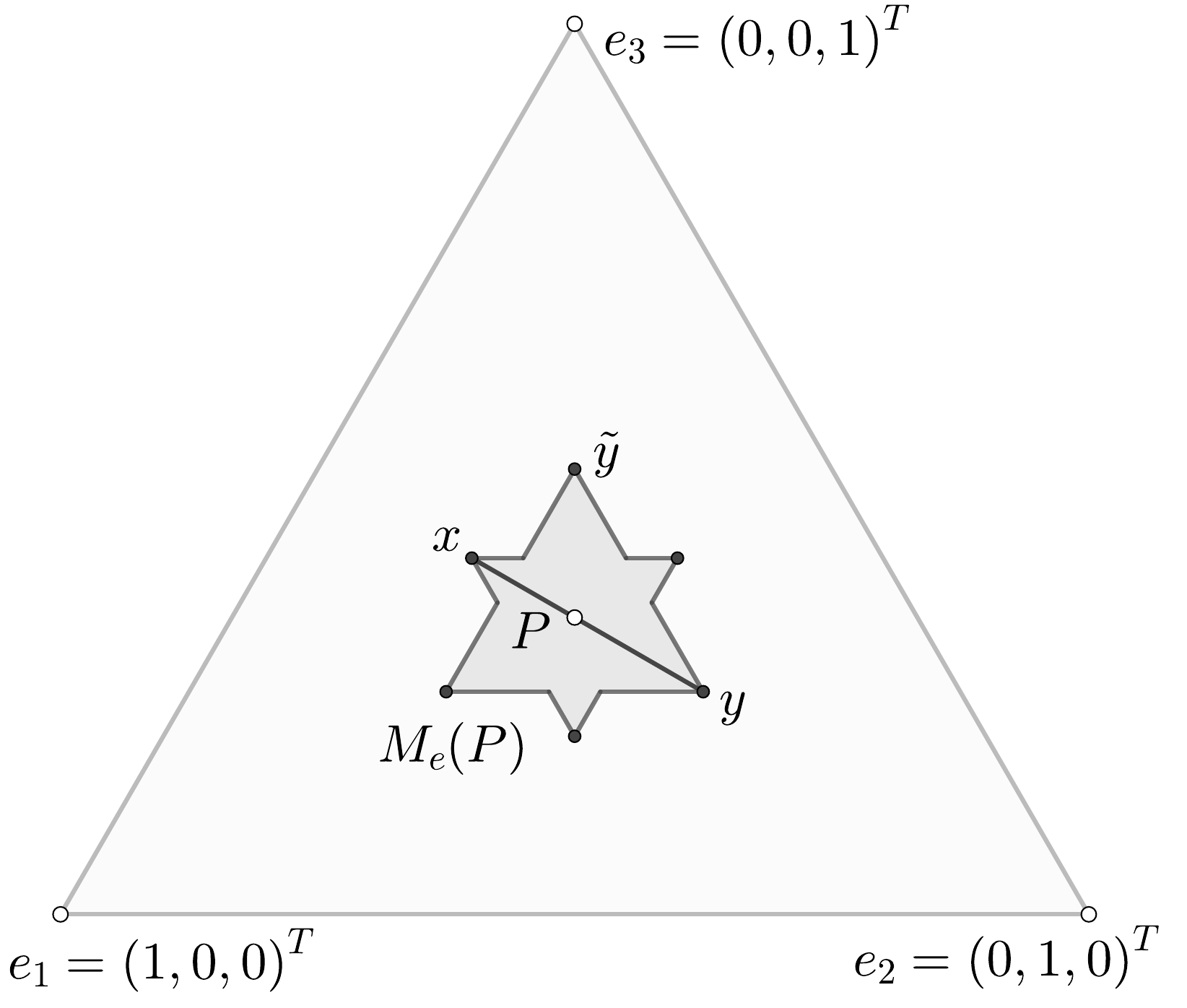}
\includegraphics[width=0.49\textwidth]{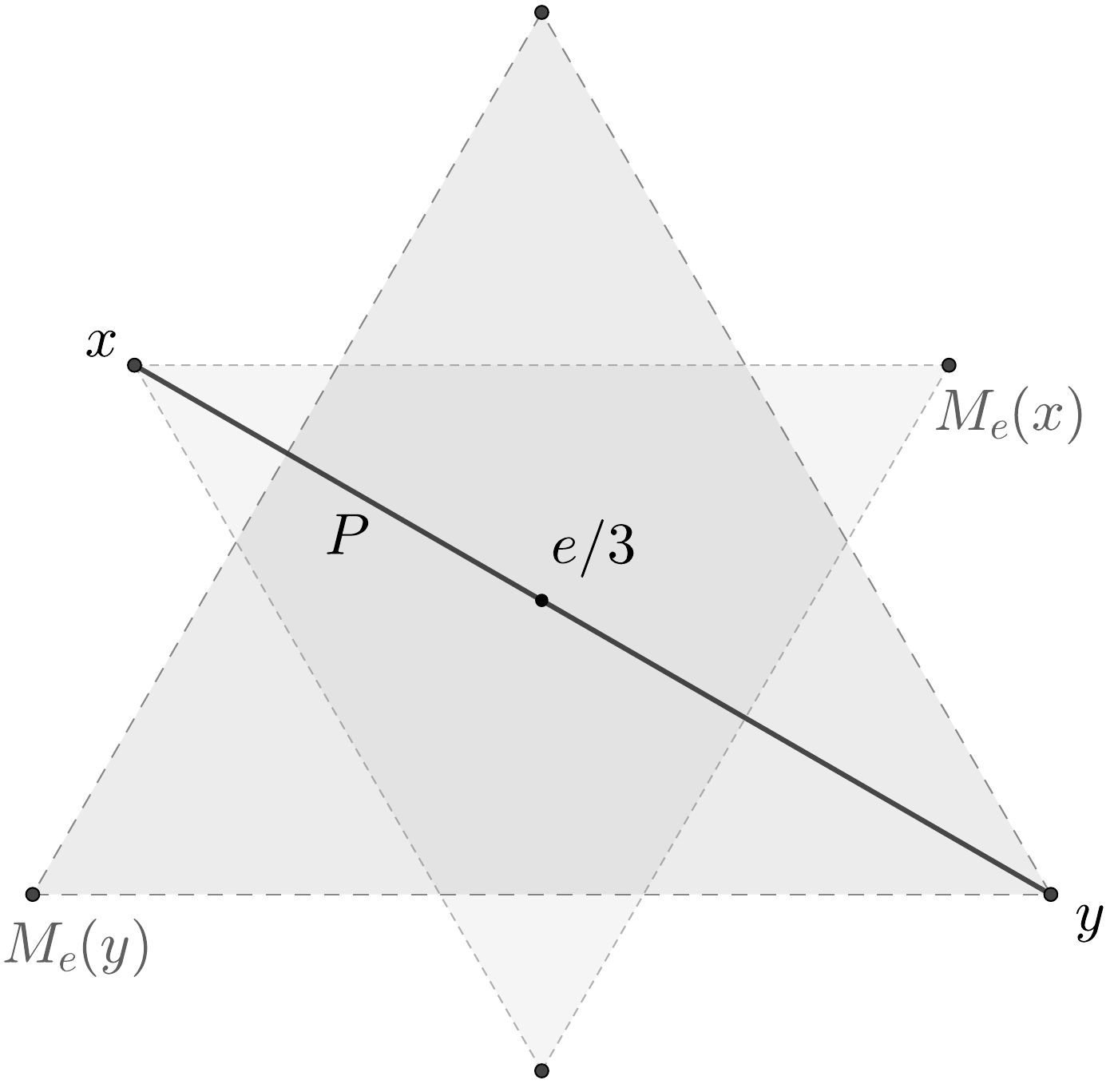}
\caption{
Visualization of Example \ref{example_1} on the 3-dimensional standard simplex. 
The image on the right zooms in on $M_{\unitvector}(P)$ and shows the decomposition into $M_{\unitvector}(x)$ 
and $M_{\unitvector}(y)$. In particular one sees that for all $z\in P$ one has either $z\prec x$ ($\Leftrightarrow z\in M_{\unitvector}(x)$) 
or $z\prec y$ ($\Leftrightarrow z\in M_{\unitvector}(y)$) which implies \eqref{eq:maj_decomp}.}\label{Abb1}
\end{figure}


\begin{example}\label{ex_discont_IR_plus}
Consider $y=(1,1,1)^\top $, $\lambda\in[0,\frac12]$, and $d(\lambda)=(1,\lambda,\lambda^2)$. For all $\lambda\in(0,\frac12]$---using Thm.~\ref{thm_maj_halfspace} \& \ref{thm_Eb_sigma}---by direct computation one obtains
\begin{align*}
M_{d(\lambda)}(y)&=\big\{x\in\mathbb R^3 : Mx\leq \big(3-\lambda-\lambda^2 , 2-\lambda , 1 , 3-\lambda^2 , 3-\lambda , 2 , 3 , -3\big)^\top \big\}\\
&=\operatorname{conv}\Big\{{\footnotesize
\begin{pmatrix} 3-\lambda-\lambda^2\\\lambda\\\lambda^2 \end{pmatrix},
\begin{pmatrix} 1+\lambda-\lambda^2\\2-\lambda\\\lambda^2 \end{pmatrix},
\begin{pmatrix} 1\\2-\lambda\\\lambda\end{pmatrix},
\begin{pmatrix} 2-\lambda\\\lambda\\1 \end{pmatrix},
\begin{pmatrix} 1\\1\\1 \end{pmatrix}
}
\Big\}\,.
\end{align*}
as well as\footnote{
For $\lambda=0$, i.e.~$d=d(0)=(1,0,0)$ it is easy so see that every $d$-stochastic matrix is of the form
$$
A=\begin{pmatrix} 1&&\\0&v&w\\0&& \end{pmatrix}\quad\text{ with arbitrary }v,w\in\Delta^2\,.
$$
Thus $M_{d(0)}(y)=\{Ay : A\in s_{d(0)}(3)\}=\{(1+v_1+w_1,v_2+w_2,v_3+w_3)^\top : v,w\in\Delta^2\}$ which has extreme points $(3,0,0)^\top , (1,2,0)^\top , (1,0,2)^\top $.
}
\begin{align*}
M_{d(\lambda)}(y)\overset{\lambda\to 0^+}\to\operatorname{conv}\Big\{{\footnotesize
\begin{pmatrix} 3\\0\\0 \end{pmatrix},
\begin{pmatrix} 1\\2\\0 \end{pmatrix},
\begin{pmatrix} 2\\0\\1 \end{pmatrix},
\begin{pmatrix} 1\\1\\1 \end{pmatrix}
}
\Big\}\neq \operatorname{conv}\Big\{{\footnotesize
\begin{pmatrix} 3\\0\\0 \end{pmatrix},
\begin{pmatrix} 1\\2\\0 \end{pmatrix},
\begin{pmatrix} 1\\0\\2 \end{pmatrix}
}
\Big\}=M_{d(0)}(y)\,.
\end{align*}
\end{example}

\begin{example}\label{ex_pos_y_necess}
Let $y=(1,1,-1)^\top $, $d=(1,2,3)^\top $. Using Thm.~\ref{thm_maj_halfspace} \& \ref{thm_Eb_sigma}, by direct computation
\begin{align*}
M_d(y)&=\Big\{x\in\mathbb R^3 : Mx\leq \frac16\big(6,9,12,12,10,8,6,-6\big)^\top \Big\}\\
&=\operatorname{conv}\Big\{{\footnotesize
\begin{pmatrix} 1\\1\\-1 \end{pmatrix},
\begin{pmatrix} 1\\-2/3\\2/3 \end{pmatrix},
\begin{pmatrix} 1/2\\3/2\\-1 \end{pmatrix},
\begin{pmatrix} -1/3\\3/2\\-1/6 \end{pmatrix},
\begin{pmatrix} -1/3\\-2/3\\2 \end{pmatrix}}
\big\}
\end{align*}
so the only possible candidate for the point $z$ from Thm.~\ref{theorem_max_corner_maj} is the vector $z=(-1/3,2/3,2)$ (because $z_{[1]}=2> x_{[1]}$ for all other extreme points $x$). However, one has $ y\not\prec z$ because
$
y_{[1]}+y_{[2]}=1+1=2 > \frac{5}{3} = z_{[1]}+z_{[2]}
.$
\end{example}

\begin{example}\label{ex_counter_y_max}
Let $y=(4,0,1)^\top $, $d=(4,2,1)^\top $. Using Thm.~\ref{thm_maj_halfspace} \& \ref{thm_Eb_sigma}, by direct computation
\begin{align*}
M_d(y)=\big\{x\in\mathbb R^3 : Mx\leq (4,2,1,5,5,3,5,-5)^\top \big\}=\operatorname{conv}\Big\{{\footnotesize
\begin{pmatrix} 4\\ 1 \\0 \end{pmatrix},
\begin{pmatrix} 4\\ 0 \\1 \end{pmatrix},
\begin{pmatrix}3 \\2\\0 \end{pmatrix},
\begin{pmatrix} 2\\2\\1 \end{pmatrix}}
\big\}\,.
\end{align*}
Therefore $M_d(y)\subseteq M_{\unitvector}(y)$, but $\frac{y}{d}=(1,0,1)^\top $ and $d$ are not similarly ordered. Be aware that this example generalizes to $y=(\alpha^2,0,1)^\top $, $d=(\alpha^2,\alpha,1)^\top $ for all $\alpha>1$ so the phenomenon occurs no matter whether $d_1\geq d_2+d_3$ or $d_1<d_2+d_3$ (cf.~\ref{app_a}).
\end{example}

%
%
\begin{example}\label{example_not_cont_A}
To see discontinuity of the map
$$
P(b):D(P)\to \mathcal P_c(\mathbb R^n)\qquad A\mapsto P_A(b)=\{x\in\mathbb R^n : Ax\leq b\}
$$
for arbitrary but fix $b\in\mathbb R^m$ and domain\footnote{This choice of domain ensures that the co-domain of $P$ is $P_c(\mathbb R^n)$, i.e.~that all $P_A(b)$ are non-empty and bounded (hence compact), cf.~\cite[Ch.~8.2]{Schrijver86}.} $D(P)$ consisting of all $A\in\mathbb R^{m\times n}$ such that $P_A(0)=\{0\}$ and $P_A(b)\neq\emptyset$, consider the following: let
$$
A={\footnotesize\begin{pmatrix}
1&0\\-1&0\\0&1\\0&-1
\end{pmatrix}}\ ,\ A_t={\footnotesize\begin{pmatrix}
1&0\\-1&0\\\sin(t)&\cos(t)\\0&-1
\end{pmatrix}}\qquad\text{ and }\qquad b={\footnotesize\begin{pmatrix}
1\\1\\0\\0
\end{pmatrix}}
$$
for all $t\in(0,1]$. It is readily verified that
\begin{align*}
P_{A}(b)&=\operatorname{conv}\big\{{\footnotesize \begin{pmatrix} -1\\0 \end{pmatrix},\begin{pmatrix} 1\\0 \end{pmatrix} }\big\}\hspace*{64pt}\text{ as well as}\\
P_{A_t}(b)&=\operatorname{conv}\big\{{\footnotesize \begin{pmatrix} 0\\0 \end{pmatrix},\begin{pmatrix} -1\\0 \end{pmatrix},\begin{pmatrix} -1\\\tan(t) \end{pmatrix} }\big\}\qquad\text{ for all }t>0
\end{align*}
and $(A_t)_{t\geq 0}\subset D(P)$. Thus by definition of the Hausdorff metric
$$
\delta(P_{A_t}(b),P_A(b))\geq \max_{z\in P_A(b)}\min_{w\in P_{A_t}(b)}\|z-w\|_1\geq \min_{w\in P_{A_t}(b)}\big\| {\footnotesize \begin{pmatrix} 1\\0 \end{pmatrix}}-w\big\|_1=1
$$
for all $t>0$ but, obviously, $\lim_{t\to 0^+}\|A_t-A\|=0$ so $P(b)$ cannot be continuous. 
\end{example}
%
%
%
%
\begin{example}\label{ex_wandering_d_vector}
Let $y=(3,2,1)^\top $, $\lambda\in[0,1]$ and $d(\lambda)=(2+\lambda,2,2-\lambda)$ so 
$$
M_{d(0)}(y)=M_\unitvector(y)=\operatorname{conv}\{\underline{\sigma}y : \sigma\in S_3\}\qquad\text{ and }\qquad M_{d(1)}(y)=M_y(y)=\{y\}
$$
(cf.~also Example \ref{example_2}). Thus the parameter $\lambda\in[0,1]$ describes the deformation of a classical majorization polytope into a singleton. Indeed one readily computes
\begin{align*}
M_{d(\lambda)}(y)=\Big\{x\in\mathbb R^3 : Mx\leq {\footnotesize\begin{pmatrix}
3\\\frac{6}{2+\lambda}\\\frac{6-3\lambda}{2+\lambda}\\5\\5-\lambda\\5-2\lambda\\6\\-6
\end{pmatrix}}\Big\}=\operatorname{conv}\Big\{{\footnotesize
\begin{pmatrix} 3\\2\\1 \end{pmatrix},
\begin{pmatrix} 3\\1+\lambda\\2-\lambda \end{pmatrix},
\frac{1}{2+\lambda}\begin{pmatrix} 4+5\lambda\\6\\2+\lambda \end{pmatrix},}&\\
{\footnotesize\frac{1}{2+\lambda}\begin{pmatrix} 2\lambda^2+5\lambda+2\\6\\-2\lambda^2+\lambda+4 \end{pmatrix},
\frac{1}{2+\lambda}\begin{pmatrix} -\lambda^2+6\lambda+4\\\lambda^2+3\lambda+2\\6-3\lambda \end{pmatrix},
\frac{1}{2+\lambda}\begin{pmatrix} 2\lambda^2+5\lambda+2\\-2\lambda^2+4\lambda+4\\6-3\lambda \end{pmatrix}}&
\Big\}\,.
\end{align*}
\end{example}
\begin{figure}[!htb]
\centering
(i)\includegraphics[width=0.45\textwidth]{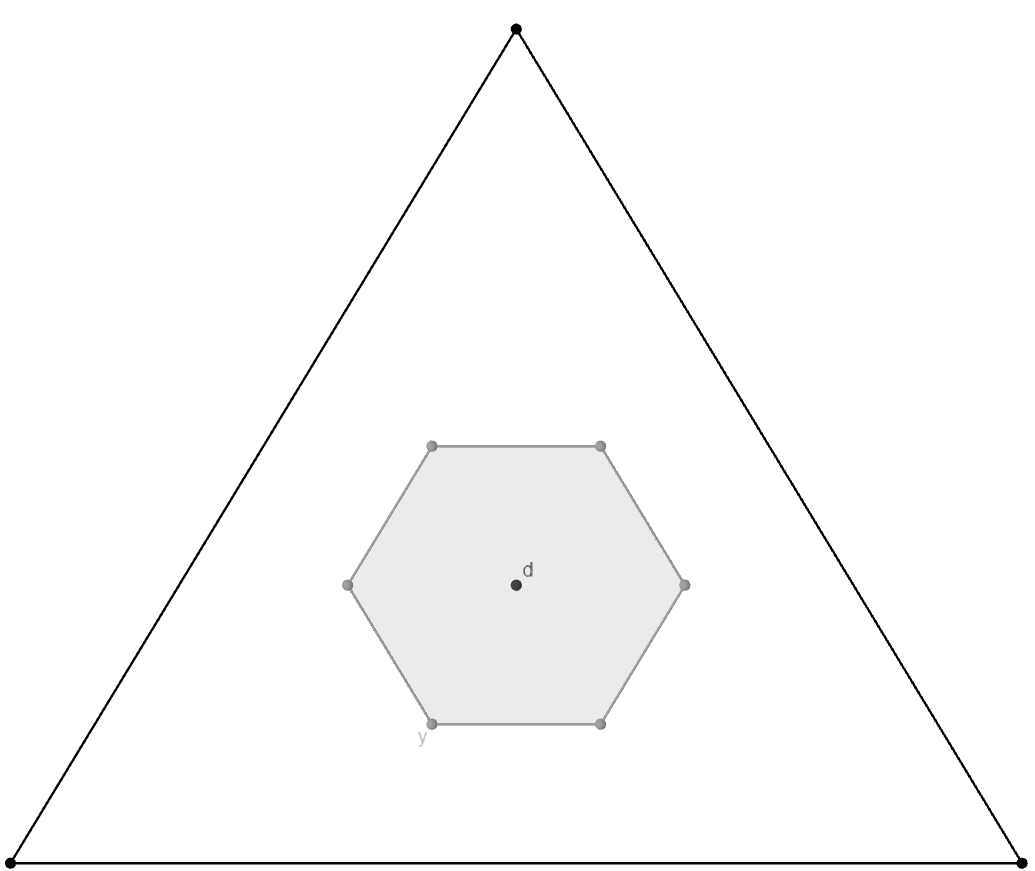}
(ii)\includegraphics[width=0.45\textwidth]{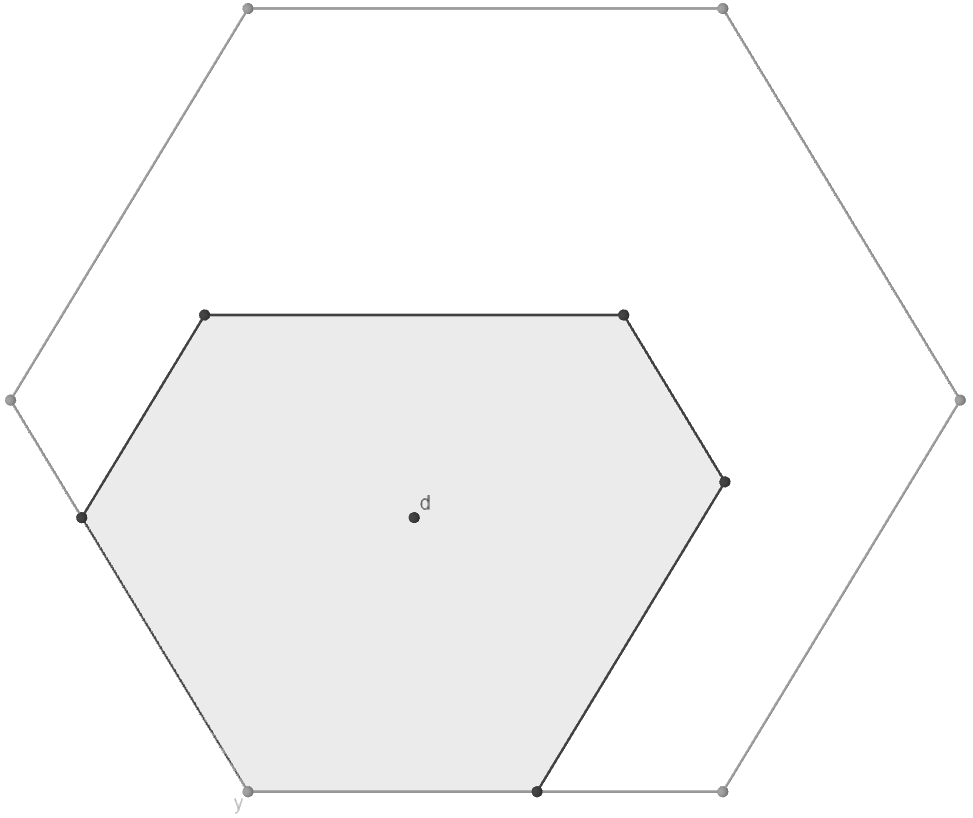}
(iii)\includegraphics[width=0.45\textwidth]{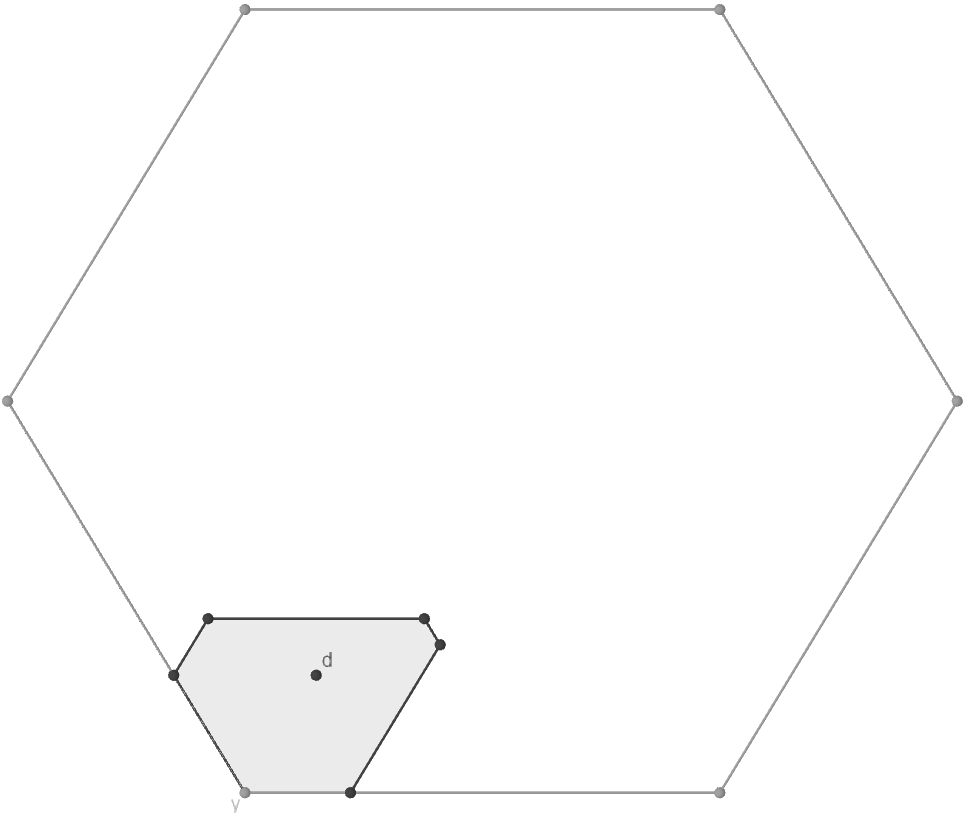}
(iv)\includegraphics[width=0.45\textwidth]{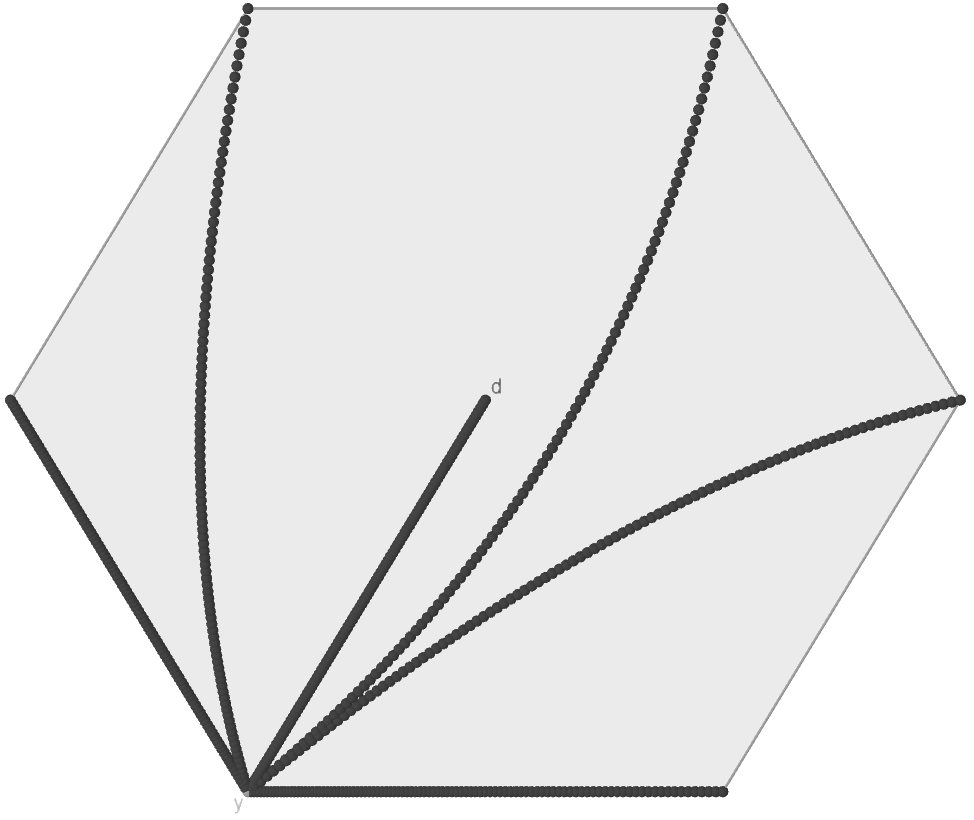}
\caption{
Visualization of Example \ref{ex_wandering_d_vector}. 
(i): Shows $M_{d(0)}(y)=M_\unitvector(y)=\operatorname{conv}\{\underline{\sigma}y : \sigma\in S_3\}$ inside (a multiple of) the 3-dimensional standard simplex. (ii): Zooms in on the classical majorization polytope $M_\unitvector(y)$. The shaded area is $M_{d(\lambda)}(y)$ for $\lambda=0.3$. (iii): Shows $M_{d(\lambda)}(y)$ for $\lambda=0.7$. (iv): The graph of the map $\lambda\mapsto\operatorname{ext}(M_{d(\lambda)}(y))$.}\label{Abb2}
\end{figure}
%
%


\noindent\textbf{Acknowledgments.} The authors are grateful to Narutaka Ozawa for providing the counterexample in Remark \ref{rem_maj_vector} (iv) in a discussion on MathOverflow \cite{322636}, as well as the anonymous referee for making them aware of reference \cite{Dahl10} and for beneficial remarks which led to a substantially improved presentation of the material.
The authors also thank Thomas Schulte-Herbr\"uggen
for valuable and constructive comments during the preparation of this manuscript, and Matteo Lostaglio and {\'{A}}lvaro M.~Alhambra for drawing their attention to some more recent publications related to thermo-majorization.
This research is part of the Bavarian excellence network \textsc{enb}
via the International PhD Programme of Excellence
\textit{Exploring Quantum Matter} (\textsc{exqm}), as well as the \textit{Munich Quantum Valley} of the Bavarian
State Government with funds from Hightech Agenda \textit{Bayern Plus}.

\bibliographystyle{elsarticle-harv}
\bibliography{../../../../../../../../control21vJan20}

%
%
%
\end{document}